\theoremstyle{plain}
\newtheorem{Theorem}{Theorem}[section]
\newtheorem{Lemma}[Theorem]{Lemma}
\newtheorem{definition}[Theorem]{Definition}
\newtheorem*{Theorem*}{Theorem}
\theoremstyle{remark}
\newtheorem{Remark}[Theorem]{\bf{Remark}}
\newtheorem{example}[Theorem]{\bf{Example}}
\journal{....}
\begin{document}
	\begin{frontmatter}
		\title{A complete characterization of the blow-up solutions to discrete $p$-Laplacian parabolic equations with $q$-reaction under the mixed boundary conditions\\}

		\author{Jaeho Hwang}
		\ead{hjaeho@sogang.ac.kr}
		
		\cortext[cjhp]{Corresponding author}
		\address{Department of Mathematics, Sogang University, Seoul 04107, Republic of Korea}

		\begin{abstract}
			In this paper, we consider discrete $p$-Laplacian parabolic equations with $q$-reaction term under the mixed boundary condition and the initial condition as follows:
			\begin{equation*}
			\begin{cases}
			u_{t}\left(x,t\right) = \Delta_{p,\omega} u\left(x,t\right) +\lambda  \left\vert u\left(x,t\right) \right\vert^{q-1} u\left(x,t\right), &\left(x,t\right) \in S \times \left(0,\infty\right), \\ \mu(z)\frac{\partial u}{\partial_{p} n}(z)+\sigma(z)\vert u(z)\vert^{p-2}u(z)=0, &\left(x,t\right) \in \partial S \times \left[0,\infty\right), \\ u\left(x,0\right) = u_{0}(x) \geq 0, &x \in \overline{S}.
			\end{cases}
			\end{equation*}
			 where $p>1$, $q>0$, $\lambda>0$ and $\mu,\sigma$ are nonnegative functions on the boundary $\partial S$ of a network $S$, with $\mu(z)+\sigma(z)>0$, $z\in\partial S$. Here, $\Delta_{p,\omega}$ and $\frac{\partial \phi}{\partial_{p} n}$ denote the discrete $p$-Laplace operator and the $p$-normal derivative, respectively. The parameters $p>1$ and $q>0$ are completely characterized to see when the solution blows up, vanishes, or exists globally. Indeed, the blow-up rates when blow-up does occur are derived. Also, we give some numerical illustrations which explain the main results.
		\end{abstract}
		
		\begin{keyword}
			discrete p-Laplacian, parabolic equation, blow-up, extinction, mixed boundary value problem, blow-up rate
			\MSC [2010] 39A12 \sep 35F31 \sep 35K91 \sep 35K57 
		\end{keyword}
	\end{frontmatter}

\section{Introduction}\label{intro}
The $p$-Laplacian parabolic equations with $q$-reaction term (reaction-diffusion equations)
\begin{equation*}
	u_{t}=\mathrm{div}(|\nabla u|^{p-2}\nabla u)+\lambda|u|^{q-1}u,
\end{equation*}
have been investigated by a lot of researchers. They study above equations under the Dirichlet boundary condition, the Neumann boundary condition, the Robin boundary condition, and so on, which have found many applications in chemical reactions, electronic models, and biological phenomena (see \cite{YJ, LX, C2, L}).

In recent years, there are researchers who study the reaction-diffusion equations under the boundary conditions which mix Dirichlet boundary condition and Neumann boundary condition or Dirichlet boundary condition and Robin boundary condition (see \cite{K,GMPR,Z}). From this motivation, we consider the mixed boundary conditions which include represent boundary conditions for each boundary point.

In this paper, we discuss the discrete $p$-Laplacian parabolic equations with $q$-reaction term under the mixed boundary conditions as follows:
\begin{equation}\label{equation}
	\begin{cases}
	u_{t}\left(x,t\right) = \Delta_{p,\omega} u\left(x,t\right) +\lambda  \left\vert u\left(x,t\right) \right\vert^{q-1} u\left(x,t\right), &\left(x,t\right) \in S \times \left(0,\infty\right),\\
	B[u]=0, &\text{on}\,\,\partial S \times \left[0,\infty\right),\\
	u\left(x,0\right) = u_{0}(x) \geq 0, &x \in \overline{S}
	\end{cases}
\end{equation}
where $p>1$, $q>0$, $\lambda>0$, and $B[u]=0$ on $\partial S\times (0,\infty)$ stands for the boundary condition
\begin{equation}\label{boundary}
	\mu(z)\frac{\partial u}{\partial_{p} n}(z)+\sigma(z)\vert u(z)\vert^{p-2}u(z)=0.
\end{equation}
Here, $\mu$ and $\sigma$ are nonnegative functions on the boundary $\partial S$ of a network $S$, with $\mu(z)+\sigma(z)>0$ for all $z\in \partial S$. Here, $\Delta_{p,\omega}$ and $\frac{\partial \phi}{\partial_{p} n}$ denote the discrete $p$-Laplace operator and the $p$-normal derivative, respectively (which will be introduced in Section 1). It is easy to see that the boundary condition \eqref{boundary} includes the various boundary conditions such as the Dirichlet boundary condition, the Neumann boundary condition, the Robin boundary condition, and so on. We note here that one of the meaning of our result is an unified approach.

As far as the authors know, it seems that there have been no paper which deal with the $p$-Laplacian parabolic equations under the above mixed boundary conditions, in the discrete case, not even in the continuous case. Therefore, it is expected that our methods will be obtained more interesting results in the discrete and continuous case.

The aim of this paper is to characterize `completely' the parameters $p>1$, $q>0$ $\mu$, $\sigma$, and $\lambda>0$ to see when the solutions to the equation \eqref{equation} blows up, vanishes, or exists globally.

In conclusion, main result of this paper is divided into two cases.\\
Case 1: $\sigma\equiv 0$ (Neumann boundary condition).\\
In this case, the solution to the equation \eqref{equation} blows up in finite time $T$ if and only if  $q>1$, for every $\lambda>0$ and nontrivial nonnegative initial data $u_{0}$.\\
Case 2: $\sigma\not\equiv 0$.\\
In the case of $\sigma\not\equiv 0$, we summarize the result as following:
\newpage
\begin{figure}[ht]
	\centering
	\includegraphics[width=10.66cm,height=6cm]{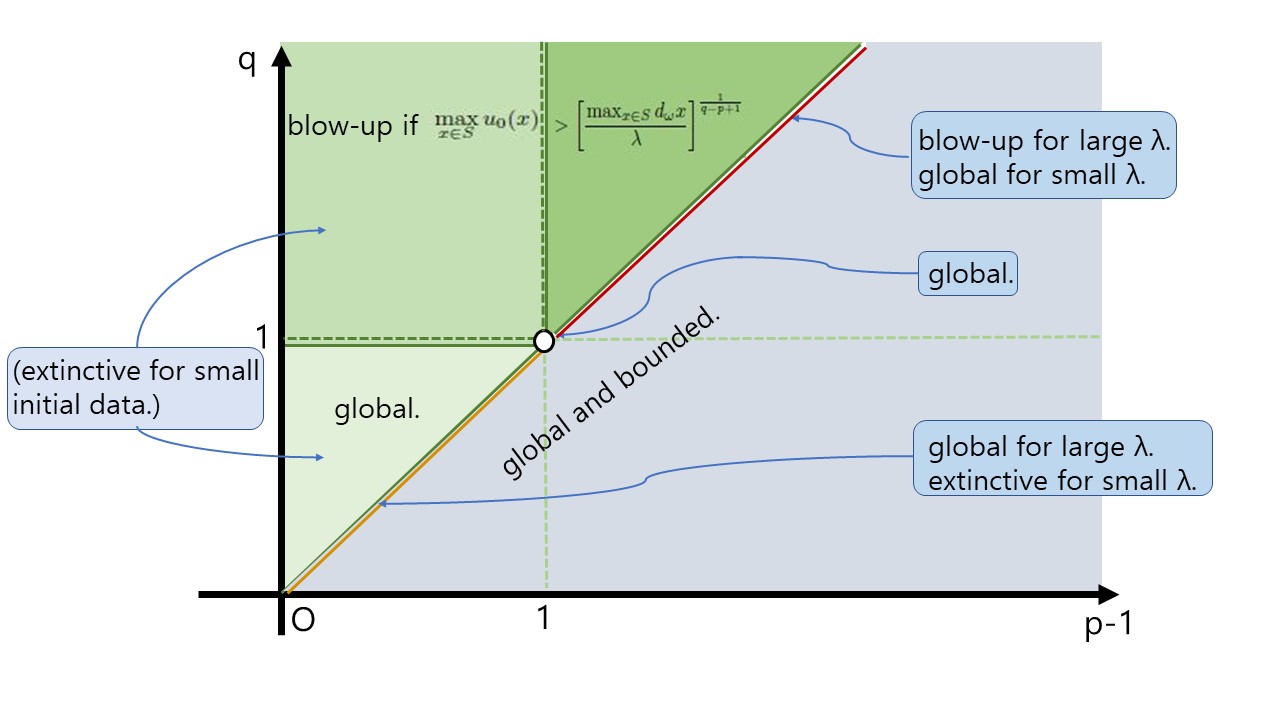}
	\\Figure 0. A complete characterization of $p$ and $q$.
\end{figure}

As seen in the Figure 0, we obtain the blow-up solutions for $0<p-1<q$ and $q>1$ whenever the initial data $u_{0}$ is sufficiently large that
\begin{equation*}
	\max_{x\in S}u_{0}(x)>\left[\frac{\max_{x\in S}d_{\omega}x}{ \lambda}\right]^\frac{1}{q-p+1}.
\end{equation*}
Here, $d_{\omega}x:=\sum_{y\in \overline{S}}\omega(x,y)$ (which will be introduced in Section 1). Also, in the case $p-1=q>0$, we obtain the exact condition that when the solution blows up, exists globally, and vanishes. As a matter of fact, there have been no paper which deal with the blow-up or extinctive solutions to the equation \eqref{equation} completely in the continuous version.

Even though we discussed here the equation \eqref{equation} only in the discrete settings, instead of the continuous settings, we believe that our results are not only interesting in itself, but also may help to study the equation \eqref{equation} in the continuous settings, since the continuous version is basically approximated by the discrete version by way of numerical schemes.

We organized this paper as follows. In section 1, we discuss the preliminary concepts on networks and local existence of the solution to the equation \eqref{equation}. In section 2, we investigate discrete version of comparison principles. In section 3, we are devoted to find out blow-up condition and extinctive condition of the solution. Also, we have blow-up set and blow-up rate with the blow-up time. Finally, in section 4, we give some numerical experiments to explain our main results.

\section{Preliminaries and Discrete Comparison Principles}\label{preli}
In this section, we start with the theoretic graph notions frequently used throughout this paper (see \cite{CB1,CB2}, for more details).
\begin{definition}
	\begin{enumerate}[(i)]
		\item  A graph $G=G\left(V,E\right)$ is a finite set $V$ of $vertices$(or $nodes$) with a set $E$ of $edges$ (two-element subsets of $V$). We simply denote by $|G|$ the number of vertices in $G$. Conventionally, we denote by $x\in V$ or $x\in G$ the fact that $x$ is a vertex in $G$. Moreover, by $\{x,y\}\in E$ we mean that an edge with endpoints $x$ and $y$ and by $x\sim y$ we mean that $x$ and $y$ are connected by an edge, i.e. $x$ and $y$ are adjacent.
		\item A graph $G$ is called $simple$ if it has neither multiple edges nor loops.
		\item A graph $G$ is called $connected$ if for every pair of vertices $x$ and $y$, there exists a sequence(called a $path$) of vertices $x=x_{0},x_{1},\cdots,x_{n-1},x_{n}=y$ such that $x_{j-1}$ and $x_{j}$ are connected by an edge for $j=1,\cdots,n$.
		\item A graph $G'=G'\left(V',E'\right)$ is called a $subgraph$ of $G\left(V,E\right)$ if $V'\subset V$ and $E'\subset E$. In this case, $G$ is called a host graph of $G'$. If $E'$ consists of all the edges from $E$ which connect the vertices of $V'$ in its host graph $G$, then $G'$ is called an induced subgraph.
	\end{enumerate}
\end{definition}
Throughout this paper, by a graph $G(V, E)$ we mean that it is a connected and simple. 
\begin{definition}
	A $weight$ on a graph $G$ is a symmetric function $\omega\,:\,V\times V\rightarrow\left[0,+\infty\right)$ satisfying the following:
	\begin{enumerate}[(i)]
		\item $\omega\left(x,x\right)=0$, \,\,$x\in V$,
		\item $\omega\left(x,y\right)=\omega\left(y,x\right)$,\,\, $x$, $y\in V$ 
		\item $\omega\left(x,y\right)>0$ if and only if $x\sim y$,
	\end{enumerate}
	and a graph $G$ with a weight $\omega$ is called a weighted graph or a $network$.
\end{definition}

\begin{definition}
	Let $S(V',E')$ be an induced subgraph of a graph $G(V,E)$. By  $\partial S:=\partial S(\partial V',\partial E')$, so called a boundary of $S$, we mean a subgraph whose vertices and edges are given by
	\[
	\begin{aligned}
	&\partial V':=\{z\in V\setminus V' \,|\, y\sim z\text{ for some }y\in V' \},\\
	&\partial E':=\{ \{x,y\}\in E\setminus E' \,|\, x\text{ or }y \in V' \},
	\end{aligned}
	\]
	respectively.
\end{definition}
Let $S$ is a connected induced subgraph of a graph $G(V,E)$. By a network $\overline{S}$(or $S\cup\partial S$) we mean that it is a subgraph of a graph $G(V,E)$ with a weight $\omega$  whose vertices and edges are consisting of all those in  $S$ or $\partial S$.
\begin{definition}
	The degree $d_{\omega}x$ at a vertex $x$ in $\overline{S}$ is defined by
	\[
	d_{\omega}x:=\sum_{y\in\overline{S}}\omega\left(x,y\right).
	\]
\end{definition}


We now introduce notations for a calculus on graphs. From now on, by a function on $T$, we mean that it is a real valued function defined on the vertices of the graph $T$. 

\begin{definition}
	Let $p>1$. Suppose that $u$ is a function on  $\overline{S}$.
	\begin{enumerate}[(i)]
		\item The $p$-directional derivative of a function $u$ at a vertex $x$ in the direction of $y$ is defined by 
		\[
		D_{y}u(x):=\left|u(z)-u(y)\right|^{p-2}[u(y)-u(x)]\sqrt{\omega(x,y)}.
		\]   
		\item The $p$-gradient $\nabla_{p,\omega}$ of a function $u$ at a vertex $x\in \overline{S}$ is defined by 
		\[
		\nabla_{p,\omega} u(x):=\left(D_{y}u(x)\right)_{y\in \overline{S}}.
		\]
		\item The (outward) $p$-normal derivative of a function $u$ at $z$ in $\partial S$ is defined by 
		\[
		\frac{\partial u}{\partial_{p} n}(z):=\sum_{y\in S}\left|u(z)-u(y)\right|^{p-2}\left[u(z)-u(y)\right]\omega(z,y).
		\]
		\item The discrete $p$-Laplacian $\Delta_{p,\omega}$ of a function $u$ at a vertex $x\in \overline{S}$ is defined by
		\[
		\Delta_{p,\omega}u\left(x\right):=\sum_{y\in \overline{S}}\left|u(z)-u(y)\right|^{p-2}\left[u\left(y\right)-u\left(x\right)\right]\omega\left(x,y\right).
		\]
	\end{enumerate}
\end{definition}

The following theorem is useful throughout this paper.

\begin{Theorem}[See \cite{KC}]\label{laplace theorem}
	Let $p>1$. For functions $f$ and $g$ on $\overline{S}$, we have
\begin{equation*}
\begin{aligned}
(i)\,& 2\sum_{x\in\overline{S}}g(x)[-\Delta_{p,\omega}f(x)]\\& = \sum_{x,y\in\overline{S}} \left\vert f(y)-f(x)\right\vert^{p-2} \left[f(y)-f(x)\right]\left[g(y)-g(x)\right]\omega(x,y).\\ (ii)\,&2\sum_{x\in\overline{S}}f(x)[-\Delta_{p,\omega}f(x)] = \sum_{x,y\in\overline{S}} \left\vert f(y)-f(x)\right\vert^{p}\omega(x,y).
\end{aligned}
\end{equation*}
\end{Theorem}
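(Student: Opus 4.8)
The plan is to prove both identities by a discrete summation-by-parts argument, exploiting only two structural facts: the symmetry of the weight, $\omega(x,y)=\omega(y,x)$, and the observation that the kernel $|f(y)-f(x)|^{p-2}$ is symmetric in $x$ and $y$ while the factor $[f(y)-f(x)]$ is antisymmetric. Part (ii) will then drop out of part (i) by specializing $g=f$. Since $\overline{S}$ is finite, every sum in sight is a finite sum, so any relabeling or reordering of indices is unconditionally justified; this removes all convergence concerns from the start.

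First I would expand the definition of the $p$-Laplacian, multiply by $g(x)$, and sum over $x$, turning the left-hand side into a genuine double sum over $\overline{S}\times\overline{S}$:
\[
\sum_{x\in\overline{S}}g(x)\bigl[-\Delta_{p,\omega}f(x)\bigr]=\sum_{x,y\in\overline{S}}g(x)\,|f(x)-f(y)|^{p-2}\,[f(x)-f(y)]\,\omega(x,y).
\]
The key step is then to relabel $x\leftrightarrow y$ in this double sum. Using $|f(x)-f(y)|^{p-2}=|f(y)-f(x)|^{p-2}$, the antisymmetry $[f(y)-f(x)]=-[f(x)-f(y)]$, and $\omega(y,x)=\omega(x,y)$, the relabeled copy equals
\[
-\sum_{x,y\in\overline{S}}g(y)\,|f(x)-f(y)|^{p-2}\,[f(x)-f(y)]\,\omega(x,y).
\]

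Adding the original expression to its relabeled copy symmetrizes the $g$-factor into $[g(x)-g(y)]$ and produces the factor $2$:
\[
2\sum_{x\in\overline{S}}g(x)\bigl[-\Delta_{p,\omega}f(x)\bigr]=\sum_{x,y\in\overline{S}}|f(x)-f(y)|^{p-2}\,[f(x)-f(y)]\,[g(x)-g(y)]\,\omega(x,y).
\]
Because the product $[f(x)-f(y)][g(x)-g(y)]$ is invariant under $x\leftrightarrow y$ and the absolute value is symmetric, the right-hand side is unchanged when $f(x)-f(y)$ and $g(x)-g(y)$ are replaced by $f(y)-f(x)$ and $g(y)-g(x)$, which is precisely the form asserted in (i). For (ii) I would set $g=f$, so the two difference factors coincide and $|f(y)-f(x)|^{p-2}[f(y)-f(x)]^2=|f(y)-f(x)|^{p}$, giving the stated identity.

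I do not anticipate a genuine obstacle: the content is entirely elementary algebra on a finite set. The only point demanding care is the sign bookkeeping in the relabeling step, namely verifying that the antisymmetry of $[f(x)-f(y)]$ paired against the symmetric kernel yields reinforcement (the factor $2$) rather than cancellation. This is exactly the discrete analogue of integration by parts, with the symmetrization playing the role of the boundary terms vanishing, and it is the mechanism that makes the quadratic form in (ii) manifestly nonnegative.
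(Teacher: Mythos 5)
Your proof is correct: the relabeling $x\leftrightarrow y$ combined with the symmetry of $\omega$ and of $|f(y)-f(x)|^{p-2}$, the antisymmetry of $[f(y)-f(x)]$, and then specializing $g=f$ is exactly the standard discrete summation-by-parts argument for this identity. The paper gives no proof of its own (the theorem is quoted from \cite{KC}), and your argument is essentially the one in that reference, so the approaches coincide.
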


\begin{Lemma}[See \cite{CH}]\label{eigenvalue}
	
	For $p>1$, there exist $\lambda_{p,0}>0$ and a function $\phi_{0}\left(x\right)>0$, $x\in S\cup\Gamma$
	such that
	\[
	\begin{cases}
	-\Delta_{p,\omega}\phi_{0}\left(x\right)=\lambda_{p,0}|\phi_{0}(x)|^{p-2}\phi_{0}\left(x\right), & x\in S,\\
	B[\phi_{0}]=0, & \text{on} \hspace{2mm}\partial S,
	\end{cases}
	\]
	where $B[\phi_{0}]=0$ on $\partial S$ stands for the boundary condition
	\begin{equation*}
	\mu(z)\frac{\partial \phi_{0} }{\partial_{p} n}(z)+\sigma(z)|\phi_{0}(z)|^{p-2}\phi_{0}(z)=0,\,\,z\in \partial S.
	\end{equation*}
	Here, $\Gamma:=\{z\in \partial S\,|\,\mu(z)>0 \}$ (which will be used throughout this paper) and $\mu,\sigma:\partial S\rightarrow[0,+\infty)$ are functions with $\mu(z)+\sigma(z)>0$ for all $z\in \partial S$. Moreover, $\lambda_{p,0}$ is given by
	\[
	\begin{aligned}
	\lambda_{p,0} =& \min_{u\in\mathcal{A},u\not\equiv0}\frac{\displaystyle\frac{1}{2}{\displaystyle\sum_{x,y\in\overline{S}}}\left\vert u\left(x\right)-u\left(y\right)\right\vert^{p}\omega\left(x,y\right)+\displaystyle\sum_{z\in\Gamma}\frac{\sigma(z)}{\mu(z)}|u(z)|^{p}}{{\displaystyle\sum_{x\in S}}\left|u\left(x\right)\right|^{p}}
	\end{aligned}
	\]
	where $\mathcal{A}:=\left\{ u\,:\,\overline{S}\rightarrow\mathbb{R}\,|\, u\not\equiv 0\,\,\text{in}\,\, S,\,\,u=0\,\,\text{on}\,\,\partial S\setminus \Gamma\right\}$.
\end{Lemma}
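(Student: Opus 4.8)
The plan is to realize $\lambda_{p,0}$ as the minimum of the displayed Rayleigh quotient and to obtain $\phi_{0}$ as a minimizer. Write $N(u)=\frac{1}{2}\sum_{x,y\in\overline{S}}|u(x)-u(y)|^{p}\omega(x,y)+\sum_{z\in\Gamma}\frac{\sigma(z)}{\mu(z)}|u(z)|^{p}$ and $D(u)=\sum_{x\in S}|u(x)|^{p}$ for the numerator and denominator. Since both are positively homogeneous of degree $p$, minimizing $N/D$ over $\mathcal{A}$ is equivalent to minimizing $N$ over $\mathcal{K}:=\{u\in\mathcal{A}:D(u)=1\}$. Because $\overline{S}$ is finite, $\mathcal{K}$ is a closed and bounded subset of the finite-dimensional space of functions on $\overline{S}$ cut out by the linear conditions $u\equiv 0$ on $\partial S\setminus\Gamma$, hence compact, and $N$ is continuous; so a minimizer $\phi_{0}$ exists and I set $\lambda_{p,0}:=N(\phi_{0})$. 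Replacing $\phi_{0}$ by $|\phi_{0}|$ only decreases $N$, because $\big||\phi_{0}(x)|-|\phi_{0}(y)|\big|\le|\phi_{0}(x)-\phi_{0}(y)|$, and leaves $D$ unchanged, so I may assume $\phi_{0}\ge 0$, with $\phi_{0}\not\equiv 0$ since $D(\phi_{0})=1$.

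Next I would derive the Euler--Lagrange system. As $\phi_{0}$ minimizes $N$ on $\{D=1\}$ and each $|a|^{p}$ is $C^{1}$ for $p>1$ with $\frac{d}{dt}|a+tb|^{p}|_{t=0}=p|a|^{p-2}a\,b$, Lagrange multipliers give a constant, which must equal $\lambda_{p,0}$, such that $\frac{1}{p}\,dN(\phi_{0})[v]=\lambda_{p,0}\,\frac{1}{p}\,dD(\phi_{0})[v]$ for every admissible variation $v$ (those with $v\equiv 0$ on $\partial S\setminus\Gamma$). Applying Theorem \ref{laplace theorem}(i) with $f=\phi_{0}$ and $g=v$ rewrites the gradient part of $dN$ as $\sum_{x\in\overline{S}}v(x)[-\Delta_{p,\omega}\phi_{0}(x)]$, so the identity becomes $\sum_{x\in\overline{S}}v(x)[-\Delta_{p,\omega}\phi_{0}(x)]+\sum_{z\in\Gamma}\frac{\sigma(z)}{\mu(z)}|\phi_{0}(z)|^{p-2}\phi_{0}(z)v(z)=\lambda_{p,0}\sum_{x\in S}|\phi_{0}(x)|^{p-2}\phi_{0}(x)v(x)$. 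Choosing $v$ supported at a single interior vertex yields $-\Delta_{p,\omega}\phi_{0}=\lambda_{p,0}|\phi_{0}|^{p-2}\phi_{0}$ on $S$; choosing $v$ supported at a single $z\in\Gamma$, and using that inside $\overline{S}$ the only neighbours of a boundary vertex lie in $S$, so that $-\Delta_{p,\omega}\phi_{0}(z)=\frac{\partial\phi_{0}}{\partial_{p}n}(z)$, yields $\frac{\partial\phi_{0}}{\partial_{p}n}(z)+\frac{\sigma(z)}{\mu(z)}|\phi_{0}(z)|^{p-2}\phi_{0}(z)=0$, i.e. $B[\phi_{0}]=0$ on $\Gamma$ after multiplying by $\mu(z)$. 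On $\partial S\setminus\Gamma$ one has $\mu=0$, $\sigma>0$, and the constraint $\phi_{0}=0$ is exactly $B[\phi_{0}]=0$ there; hence $B[\phi_{0}]=0$ on all of $\partial S$.

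It remains to upgrade $\phi_{0}\ge 0$ to $\phi_{0}>0$ on $S\cup\Gamma$ and to verify $\lambda_{p,0}>0$, which is where the main work lies. For positivity I would use a discrete strong maximum principle: if $\phi_{0}(x_{0})=0$ at some $x_{0}\in S$, the interior equation gives $0=-\Delta_{p,\omega}\phi_{0}(x_{0})=-\sum_{y}\phi_{0}(y)^{p-1}\omega(x_{0},y)\le 0$, forcing $\phi_{0}(y)=0$ for every $y\sim x_{0}$; by connectedness of $\overline{S}$ this propagates to $\phi_{0}\equiv 0$, contradicting $D(\phi_{0})=1$, so $\phi_{0}>0$ on $S$. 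If $\phi_{0}(z)=0$ for some $z\in\Gamma$, the boundary relation forces $\frac{\partial\phi_{0}}{\partial_{p}n}(z)=0$, i.e. $\sum_{y\in S}\phi_{0}(y)^{p-1}\omega(z,y)=0$, impossible since $z$ has a neighbour in $S$ on which $\phi_{0}>0$. Finally $\lambda_{p,0}=N(\phi_{0})\ge 0$, and if $\lambda_{p,0}=0$ then every edge difference of $\phi_{0}$ vanishes and $\sigma(z)\phi_{0}(z)^{p}=0$ on $\Gamma$; connectedness makes $\phi_{0}$ constant on $\overline{S}$, and since the boundary data force $\phi_{0}$ to vanish somewhere (on $\partial S\setminus\Gamma$, or at a point of $\Gamma$ with $\sigma>0$), we get $\phi_{0}\equiv 0$, a contradiction, so $\lambda_{p,0}>0$. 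The delicate points I anticipate are the reduction $-\Delta_{p,\omega}\phi_{0}(z)=\frac{\partial\phi_{0}}{\partial_{p}n}(z)$ at boundary vertices and the propagation step in the maximum principle, both of which hinge on the precise incidence structure of $\overline{S}$ and its connectedness.
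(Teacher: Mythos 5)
Your proposal cannot be compared against a proof in the paper, because the paper gives none: Lemma \ref{eigenvalue} is simply quoted from the preprint \cite{CH}. Judged on its own merits, your route --- minimize the Rayleigh quotient over $\mathcal{A}$, pass to $|\phi_{0}|$, extract the Euler--Lagrange system via Lagrange multipliers and point-mass variations, identify the multiplier with the minimum value, and finish with a discrete strong maximum principle --- is exactly the construction one would expect behind this lemma, and most of the steps are sound. In particular you correctly isolate the two structural facts that make the boundary condition come out right: in $\overline{S}$ a boundary vertex has all of its neighbours in $S$ (so $-\Delta_{p,\omega}\phi_{0}(z)=\frac{\partial \phi_{0}}{\partial_{p} n}(z)$ for $z\in\partial S$), and on $\partial S\setminus\Gamma$ the constraint $\phi_{0}=0$ is itself the boundary condition because $\mu=0$ and $\sigma>0$ there.

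Two points need repair. First, the existence step fails as written: $\mathcal{K}=\{u\in\mathcal{A}\,:\,D(u)=1\}$ is \emph{not} bounded whenever $\Gamma\neq\emptyset$, since $D(u)=\sum_{x\in S}|u(x)|^{p}$ constrains only the values on $S$ while the values on $\Gamma$ are completely free. A minimizer still exists, but by coercivity rather than compactness of $\mathcal{K}$: every $z\in\Gamma$ has a neighbour $y\in S$, where $|u(y)|\le 1$ on $\mathcal{K}$, so $N(u)\ge \left(|u(z)|-1\right)^{p}\omega(z,y)\to\infty$ as $|u(z)|\to\infty$; hence a minimizing sequence can be confined to a sublevel set of $N$ intersected with $\mathcal{K}$, which \emph{is} compact. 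Second, your argument that $\lambda_{p,0}>0$ tacitly assumes $\sigma\not\equiv 0$: the alternative ``$\phi_{0}$ must vanish on $\partial S\setminus\Gamma$ or at a point of $\Gamma$ with $\sigma>0$'' is vacuous precisely when $\sigma\equiv 0$, in which case $\Gamma=\partial S$, constants are admissible, and $\lambda_{p,0}=0$ --- exactly what the paper's remark immediately following the lemma records. So the positivity of $\lambda_{p,0}$ should carry the hypothesis $\sigma\not\equiv 0$ (the unconditional ``$\lambda_{p,0}>0$'' in the lemma statement is itself imprecise on this point, so this is as much a defect of the statement as of your proof, but it should be said explicitly). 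A final cosmetic remark: your zero-propagation step should invoke connectedness of $S$ (a standing assumption of the paper), not of $\overline{S}$, since the interior equation is unavailable at vertices of $\partial S\setminus\Gamma$ and paths passing through such vertices do not propagate the vanishing.
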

In the above, the number $\lambda_{p,0}$ is called the first eigenvalue of $\Delta_{p,\omega}$ on a network $\overline{S}$ with corresponding eigenfunction $\phi_{0}$ (see \cite{C1} and \cite{CDS} for the spectral theory of the discrete Laplace operators). Here, we note that if $\Gamma$ is empty set, then $\sum_{z\in\Gamma}\frac{\sigma(z)}{\mu(z)}|u(z)|^{p}$ implies $0$.
\begin{Remark}
	It is clear that the first eigenvalue $\lambda_{p,0}$ is nonnegative. Moreover, we note here that the first eigenvalue $\lambda_{p,0}$ satisfies the following statements:
	\begin{itemize}
		\item[(i)] If $\sigma\equiv 0$, then $\lambda_{p,0}=0$.
		\item[(ii)] If $\sigma\not\equiv 0$, then $\lambda_{p,0}>0$.
	\end{itemize}
\end{Remark}

Now we will prove the existence of the solution to the equation \eqref{equation}, using the Schauder fixed point theorem. For this reason, we need the modified version of the Arzel\'a-Ascoli theorem as follows.

\begin{Lemma}[Modified version of the Arzel\'a-Ascoli theorem]\label{Arzela}	
	Let K be a compact subset of $\mathbb{R}$ and $\overline{S}$ be a network. Consider a Banach space $ C\left( \overline{S} \times K \right)$ with the maximum norm $\lVert u \rVert_{\overline{S},K} := \max_{x\in \overline{S}} \max_{t\in K} \left\vert u\left(x,t\right)\right\vert $. Then a subset $A$ of $C\left(\overline{S} \times K\right)$ is relatively compact if A is uniformly bounded on $\overline{S} \times K$ and $A$ is equicontinuous on $K$ for each $x\in \overline{S}$.
\end{Lemma}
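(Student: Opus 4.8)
The plan is to exploit the crucial structural fact that a network $\overline{S}$ has only finitely many vertices, so that $C\left(\overline{S}\times K\right)$ is isometrically the finite product $\prod_{x\in\overline{S}}C(K)$, and the statement reduces to finitely many applications of the classical Arzel\'a--Ascoli theorem on the compact set $K\subset\mathbb{R}$. Since $C\left(\overline{S}\times K\right)$ is a complete metric space, a subset is relatively compact if and only if every sequence in it admits a subsequence converging in the ambient space; hence it suffices to show that an arbitrary sequence $(u_n)_n$ in $A$ has a subsequence converging in the norm $\lVert\cdot\rVert_{\overline{S},K}$ to some limit.

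First I would fix an enumeration $\overline{S}=\{x_1,\dots,x_N\}$ and, for each vertex $x_i$, introduce the family $A_{x_i}:=\{u(x_i,\cdot)\,:\,u\in A\}\subset C(K)$. The assumption that $A$ is uniformly bounded on $\overline{S}\times K$ immediately yields uniform boundedness of each $A_{x_i}$ in $C(K)$, and the assumption that $A$ is equicontinuous on $K$ for each $x$ yields equicontinuity of each $A_{x_i}$. The classical Arzel\'a--Ascoli theorem then guarantees that every $A_{x_i}$ is relatively compact in $C(K)$.

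Next I would perform a finite diagonal extraction. Starting from $(u_n)_n$, relative compactness of $A_{x_1}$ lets me extract a subsequence along which $u_n(x_1,\cdot)$ converges uniformly on $K$; passing to a further subsequence makes $u_n(x_2,\cdot)$ converge uniformly; iterating exactly $N$ times—this is precisely where the finiteness of $\overline{S}$ is indispensable—produces a single subsequence, still written $(u_n)_n$, for which $u_n(x_i,\cdot)\to v_i$ uniformly on $K$ for every $i$, with each $v_i\in C(K)$. Setting $u(x_i,t):=v_i(t)$ defines an element $u\in C\left(\overline{S}\times K\right)$, since continuity along the discrete, finite factor $\overline{S}$ is automatic. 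Because the maximum norm is a maximum over the finitely many vertices, $\lVert u_n-u\rVert_{\overline{S},K}=\max_{1\le i\le N}\max_{t\in K}|u_n(x_i,t)-v_i(t)|\to 0$, as a finite maximum of quantities each tending to $0$; this exhibits the required convergent subsequence.

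The step I expect to demand the most care is the reduction itself rather than any estimate: one must recognize that the discreteness and finiteness of $\overline{S}$ render continuity and equicontinuity in the $x$-variable vacuous, which is exactly why the hypothesis need only assume equicontinuity ``on $K$ for each $x$'' and no compactness input in the spatial variable beyond the trivial $N$-fold extraction. Everything else is a faithful transcription of the scalar Arzel\'a--Ascoli theorem through the product decomposition, so no genuinely new analytic difficulty arises.
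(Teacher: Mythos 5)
Your proposal is correct, but it takes a genuinely different route from the paper's. You reduce the lemma to the classical Arzel\'a--Ascoli theorem: since $\overline{S}$ is finite, $C\left(\overline{S}\times K\right)$ is isometrically the finite product $\prod_{x\in\overline{S}}C(K)$, each slice family $A_{x_i}=\{u(x_i,\cdot):u\in A\}$ inherits uniform boundedness and equicontinuity, and a finite diagonal extraction plus the sequential characterization of relative compactness in a complete metric space finishes the argument. The paper does not invoke the classical theorem as a black box; instead it re-runs the standard total-boundedness proof in the product setting: equicontinuity yields a finite cover $\left\{N_{1}(t_i,\delta_i)\right\}$ of $K$, uniform boundedness yields a finite $\frac{\epsilon}{4}$-net $\left\{\xi_j\right\}$ for the values $f(x,t_i)$, and the sets $A_k$, indexed by the finitely many functions $k:\overline{S}\times\{1,\dots,n\}\rightarrow\{1,\dots,m\}$, cover $A$ and each have diameter less than $\epsilon$, so $A$ is totally bounded and hence relatively compact by completeness. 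Finiteness of $\overline{S}$ is essential in both arguments but enters differently: for you it caps the number of successive subsequence extractions, while for the paper it keeps the index set $F$ of the covering finite. Your version is shorter and more modular, and it makes transparent that the spatial variable contributes nothing analytic; the paper's version is self-contained at the level of proofs (useful since the ``modified'' statement is not literally the textbook one) and produces an explicit $\epsilon$-net, i.e.\ total boundedness, rather than passing through sequences. Both are complete and correct proofs of the stated lemma.
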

\begin{proof}
	The proof of this version is similar to the original one (see \cite{B}). Thus we only state the idea of the proof. Let $\epsilon>0$ be arbitrarily given. Since $K$ is compact on $\mathbb{R}$ and A is equicontinuous on $K$, there is a finite open cover $\left\{N_{1}\left(t_i, \delta_i\right)\right\}$ of $K$ such that
	\begin{center}
		$\left\vert f\left(x,t\right)-f\left(x,t_i\right) \right\vert <\frac{\epsilon}{4} \hspace{1mm} \text{for all} \hspace{1mm} t\in N_{1}\left(t_i,\delta_i\right),\hspace{1mm} i=1,\dots,n ,\hspace{1mm} x \in \overline{S},\hspace{1mm} f \in A$.
	\end{center}
	Define $E=\left\{f\left(x,t_i\right)\vert\, x \in \overline{S} ,\, i=1,\dots,n,\, f \in A\right\} $. Then $E$ is totally bounded, since A is uniformly bounded. Hence there is a sequence $\left\{\xi_{j} \right\}^{m}_{j=1}$ in $\mathbb{R}$ such that
	\begin{center}
		$E \subset \bigcup^{m}_{j=1} N_{2}\left(\xi_{j},\frac{\epsilon}{4}\right)$.
	\end{center}
	Now, set $F:=\left\{k\,:\,\overline{S}\times\{1,\dots,n\}\rightarrow\{1,\dots,m\} \,\vert\, \text{k is a function}\right\}$ and define
	\begin{center}
		$A_k:=\{f\in A\,\vert\, f\left(x,t_i\right) \in N_{2}\left(\xi_{k_{\left(x,i\right)}},\frac{\epsilon}{4}\right),\,\, x\in\overline{S},\,\, i=1,\dots,n\}$ for each $k\in F$.
	\end{center}
	Then we have to show $A \subset \bigcup_{k\in K} A_{k}$. Let $f\in A$ be fixed. For each $x\in\overline{S}$, $i=1,\cdots,n$, $f\left(x,t_i\right)\in E\subset \bigcup_{j=1}^{m}N_{2}\left(\xi_{j},\frac{\epsilon}{4} \right)$. i.e. there is $j={k_{\left(x,i\right)}}$ such that $f\left(x,t_i \right)\in N_{2}\left(\xi_{j},\frac{\epsilon}{4}\right)$. Thus, $A \subset \bigcup_{k\in F} A_{k}$. 
	
	We now claim that the diameter of each $A_k$ is less than $\epsilon$. For each $f,g\in A_{k}$ and $(x,t)\in\overline{S}\times F$, there exists $1\leq i\leq n$ such that $t\in N_{1}\left(t_i,\delta_i\right)$ and
	\[
	\begin{aligned}
	\left\vert f\left(x,t\right)-g\left(x,t\right)\right\vert  \leq & \left\vert f\left(x,t\right)-f\left(x,t_i\right)\right\vert + \left\vert f\left(x,t_i\right)-\xi_{k_{\left(x,i\right)}} \right\vert \\ &  + \left\vert \xi_{k_{\left(x,i\right)}} - g\left(x,t_i\right) \right\vert + \left\vert g\left(x,t_i\right)-g\left(x,t\right)\right\vert < 4 \cdot \frac{\epsilon}{4}=\epsilon.
	\end{aligned}
	\]
	Hence, $A$ is totally bounded and the proof is complete.
\end{proof}
\begin{Theorem}[Local existence]\label{local existence}
	There exists $t_{0}>0$ such that the equation \eqref{equation} admits at least one solution $u$ such that $u(x,\cdot)$ is continuous on $[0,t_{0}]$ and differentiable in $(0,t_{0})$, for each $x\in \overline{S}$.
\end{Theorem}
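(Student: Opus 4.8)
The plan is to reformulate \eqref{equation} as a fixed-point equation for an integral operator on the Banach space $C(\overline{S}\times[0,t_0])$ of Lemma \ref{Arzela} and to invoke the Schauder fixed point theorem, with Lemma \ref{Arzela} supplying the compactness. Because the network is finite, the only genuinely delicate point—and the step I expect to be the main obstacle—is that the mixed condition \eqref{boundary} does not give the boundary values explicitly but couples them nonlinearly to the interior values. I would therefore first build a \emph{boundary extension} that assigns to each interior profile a consistent family of boundary values, and only then set up the fixed-point scheme on $\overline{S}$.

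For $z\in\partial S\setminus\Gamma$ one has $\mu(z)=0$ and $\sigma(z)>0$, so \eqref{boundary} forces $v(z)=0$. For $z\in\Gamma$, inserting the definition of $\frac{\partial v}{\partial_{p} n}(z)$ into \eqref{boundary} shows that $v(z)$ must solve $g_z(r)=0$, where
\[ g_z(r):=\mu(z)\sum_{y\in S}|r-v(y)|^{p-2}(r-v(y))\,\omega(z,y)+\sigma(z)|r|^{p-2}r. \]
Since $s\mapsto|s|^{p-2}s$ is, for $p>1$, a strictly increasing continuous bijection of $\mathbb{R}$, the function $g_z$ is a strictly increasing homeomorphism of $\mathbb{R}$, so $g_z(r)=0$ has a unique root $r=\Phi_z\big(\{v(y)\}_{y\in S}\big)$. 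Testing $g_z$ at $\pm\max_{y}|v(y)|$ shows $|\Phi_z|\le\max_{y}|v(y)|$, so the extension preserves uniform bounds, and the strict monotonicity together with the joint continuity of $(r,\{v(y)\})\mapsto g_z(r)$ yields continuous dependence of $\Phi_z$ on the interior values. This $\Phi_z$ is what makes the nonlinear boundary law compatible with a fixed-point formulation.

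With $\Phi_z$ in hand I would define, for $u\in C(\overline{S}\times[0,t_0])$,
\[ (\mathcal{T}u)(x,t):=u_0(x)+\int_0^t\big[\Delta_{p,\omega}u(x,s)+\lambda|u(x,s)|^{q-1}u(x,s)\big]\,ds,\quad x\in S, \]
and $(\mathcal{T}u)(z,t):=\Phi_z\big(\{(\mathcal{T}u)(y,t)\}_{y\in S}\big)$ for $z\in\partial S$. Let $\hat u_0$ denote the boundary-compatible extension of the initial data and let $\mathcal{B}:=\{u:\|u-\hat u_0\|_{\overline S,[0,t_0]}\le M\}$, a closed convex set. On $\mathcal{B}$ the integrand is bounded by some $K=K(M)<\infty$ (the network is finite and the nonlinearities are continuous); choosing $t_0$ small enough that the interior increment $t_0K$ and, through the uniform continuity of the $\Phi_z$, the induced boundary increment both stay $\le M$, one gets $\mathcal{T}(\mathcal{B})\subset\mathcal{B}$. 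The bound $|(\mathcal{T}u)(x,t_1)-(\mathcal{T}u)(x,t_2)|\le K|t_1-t_2|$ makes the interior part equi-Lipschitz in $t$, and composing with the uniformly continuous $\Phi_z$ transfers equicontinuity to the boundary part; with the uniform bound, Lemma \ref{Arzela} gives that $\mathcal{T}(\mathcal{B})$ is relatively compact. Continuity of $\mathcal{T}$ on $\mathcal{B}$ follows from the continuity of $\Delta_{p,\omega}$, of $r\mapsto|r|^{q-1}r$ and of the $\Phi_z$, together with dominated convergence in the time integral. Schauder's theorem then furnishes a fixed point $u=\mathcal{T}u$ in $\mathcal{B}$.

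It remains to read off the regularity. For $x\in S$ the fixed-point identity expresses $u(x,\cdot)$ as $u_0(x)$ plus the integral of a function that is continuous in $s$, so $u(x,\cdot)$ is $C^1$—in particular continuous on $[0,t_0]$ and differentiable on $(0,t_0)$—and differentiating recovers the first equation of \eqref{equation}, while $t=0$ gives $u(x,0)=u_0(x)$. For $z\in\partial S$, continuity of $u(z,\cdot)$ is inherited from $\Phi_z$ and its differentiability follows by differentiating $g_z(u(z,t))=0$, the positivity of $\partial_r g_z$ excluding degeneracy; moreover $B[u]=0$ holds on $[0,t_0]$ by construction. As indicated, the crux of the argument is the boundary-extension step: once \eqref{boundary} is shown to determine the boundary unknowns uniquely, with bound preservation and continuous dependence, the remainder is a standard Schauder argument adapted to the finite network through Lemma \ref{Arzela}.
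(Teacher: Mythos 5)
Your proposal is correct and takes essentially the same route as the paper's own proof: your boundary-extension map $\Phi_z$ is exactly the paper's strictly increasing bijection $\psi$ (with $a(x)=\mu(z)\omega(z,x)$, $b=\sigma(z)$) used to resolve the boundary values uniquely, your operator $\mathcal{T}$ is the paper's operator $D$ (the paper simply keeps the boundary values implicit by working in $C(S\times[0,t_0])$ while you carry them explicitly), and both arguments conclude via the modified Arzel\'a--Ascoli lemma and the Schauder fixed point theorem. The remaining differences are cosmetic: the paper centers its invariant ball at the origin with radius $2\lVert u_0\rVert_{S,t_0}$ and checks continuity of the operator by explicit case-by-case estimates, whereas you center at the extended initial datum and argue continuity qualitatively---both adequate on a finite network.
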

\begin{proof}
	We first start with the following Banach space:
	\begin{equation*}
	C(S\times [0,t_{0}]):=\left\{ u:S\times [0,t_{0}]\rightarrow \mathbb{R} \,\vert \,u(x,\cdot)\in C([0,t_{0}])\,\,\text{for each}\,\,x\in S \right\}
	\end{equation*}
	with the maximum norm $\lVert u\rVert_{S,t_{0}}:=\max_{x\in S}\max_{0\leq t\leq t_{0}}|u(x,t)|$, where $t_{0}\in \mathbb{R}$ is a positive constant which will be defined later. Now, consider a subspace
	\begin{equation*}
	B_{t_{0}}:=\left\{ u\in C(S\times [0,t_{0}]) \,|\,\lVert u\rVert_{S,t_{0}}\leq 2\lVert u_{0}\rVert_{S,t_{0}} \right\}
	\end{equation*}
	of a Banach space $C(S\times [0,t_{0}])$. Then it is clear that $B_{t_{0}}$ is convex. In order to apply the Schauder fixed point theorem, we have to show that $B_{t_{0}}$ is closed. Let $f_{n}$ be a sequence in $B_{t_{0}}$ which converges to $f$. Since the convergence is uniform, $f$ is continuous. Moreover, $\left|\lVert f_{n} \rVert_{S,t_{0}}- \lVert f \rVert_{S,t_{0}} \right| \leq \lVert f_{n}-f\rVert_{S,t_{0}}$ implies that $f\in B_{t_{0}}$. Hence, $B_{t_{0}}$ is closed.
	
	Now, define a function $\psi:\mathbb{R}\rightarrow \mathbb{R}$ by
	\begin{equation*}
	\psi(\gamma):=\sum_{x\in S}|\gamma-u(x,t)|^{p-2}\left[\gamma-u(x,t)\right]a(x)+b|\gamma|^{p-2}\gamma,
	\end{equation*}
	where $a(x)\geq 0$ for all $x\in S$, $b\geq 0$ with $a(x)+b>0$ for some $x\in S$. Then it is easy to see that $\psi$ is a continuous function which is strictly increasing and bijective on $\mathbb{R}$. Therefore, there exists $\rho\in\mathbb{R}$ uniquely such that $\psi(\rho)=0$. It means that for all $u\in B_{t_{0}}$ and $(z,t)\in \partial S\times[0,t_{0}]$, we can define the value of $u(z,t)$ uniquely according to the boundary condition $B[u]=0$. i.e. for every $u\in B_{t_{0}}$, $u(z,t)$ satisfies
	$$
	\mu(z)\frac{\partial u}{\partial_{p} n}(z,t)+\sigma(z)|u(z,t)|^{p-2}u(z,t)=0,\,\,(z,t)\in \partial S\times[0,t_{0}]
	$$
	for all $(z,t)\in \partial S \times [0,t_{0}]$, where $\mu,\sigma:\partial S\rightarrow[0,+\infty)$ are given functions with $\mu(z)+\sigma(z)>0$ for all $z\in \partial S$.
	Then by the boundary condition, it is clear that $u(z,t)$ satisfies $|u(z,t)|\leq \lVert u\rVert_{S,t_{0}}$, $(z,t)\in \partial S\times [0,t_{0}]$.
	
	Let us define an operator $D:B_{t_{0}} \rightarrow B_{t_{0}}$ by
	\begin{equation*}
	D[u](x,t):=u_{0}(x)+\int_{0}^{t}\Delta_{p,\omega}u(x,s)+\lambda|u(x,s)|^{q-1}u(x,s)  \,ds,\,\,(x,t)\in S\times [0,t_{0}],
	\end{equation*}
	where $u_{0}:\overline{S}\rightarrow \mathbb{R}$ is a given function.

	Now, put $$t_{0}:=\frac{\max_{x\in S} u_{0}(x)}{\omega_{0}\left[4\max_{x\in S} u_{0}(x)\right]^{p-1}+\lambda \left[2\max_{x\in S} u_{0}(x)\right]^{q}},$$
	where $\omega_{0}:=\max_{x\in S}\sum_{y\in \overline{S}}d(x,y)$. Then it is easy to see that the operator $D$ is well-defined, in view of the definition of $t_{0}$.
	
	Now we will show that $D$ is continuous. The verification of the continuity is divided into 4 cases as follows: (i) $1<p<2$, $0<q<1$, (ii) $1<p<2$, $1\leq q$, (iii) $2\leq p$, $0<q<1$, and (iv) $2\leq p$, $2\leq q$. However, each case can be handled in a similar way with a little modification, here we handle the case (iii) only. For $u$ and $v$ in $B_{t_{0}}$, it follows that
	\begin{equation*}
	\begin{aligned}
	&\left\vert D[u](x,t)-D[v](x,t) \right\vert\\& \leq  \left\vert \int_{0}^{t} 2^{2p-3}(p-1)\lVert u_0 \rVert_{S,t_{0}}^{p-2}\lVert u-v \rVert_{S,t_{0}} \sum_{y\in \overline{S}}\omega(x,y) +2^{1-q}\lambda \lVert u-v \rVert_{S,t_{0}}^{q}  \right\vert
	\\& \leq  t_{0}\left[ 2^{2p-3}\left(p-1\right)\omega_{0}\lVert u_0 \rVert_{S,t_{0}}^{p-2}   \lVert u-v \rVert_{S,t_{0}}+2^{1-q}\lambda  \lVert u-v \rVert_{S,t_{0}}^{q} \right]
	\end{aligned}
	\end{equation*}
	for all $(x,t)\in S\times[0,t_{0}]$. Consequently, for each $p>1$ and $q>0$,
	$$ \lVert D[u]-D[v] \rVert_{S,t_{0}} \leq C_{1} \lVert u-v \rVert_{S,t_{0}}^{\min \{p-1,1\} } +C_{2} \lVert u-v \rVert_{S,t_{0}}^{\min\{q,1\}}+C_{3}\lVert u-v \rVert_{S,t_{0}} ,$$
	where $C_{1}$, $C_{2}$, and $C_{3}$ are constants depending only on $u_{0}$, $t_{0}$, $p$, $q$, $\omega_{0}$, and $\lambda$. Therefore we get the continuity of $D$.
	
	We will show that $D(B_{t_{0}})$ is uniformly bounded on $\overline{S}\times[0,t_{0}]$ and equicontinuous on $[0,t_{0}]$. Since $D$ is well-defined, $D(B_{t_{0}})\subset B_{t_{0}}$, it is clear that $D(B_{t_{0}})$ is uniformly bounded. On the other hand, it follows that for each $x\in\overline{S}$,
	\begin{equation*}
	\begin{aligned}
	\left\vert D[u]\left(x,t_1\right)-D[u]\left(x,t_2\right) \right\vert  \leq \left\vert t_1 - t_2 \right\vert \left[\omega_{0}\left(4\lVert u \rVert_{S,t_{0}}\right)^{p-1}+\lambda \left(2\lVert u \rVert_{S,t_{0}}\right)^{q}\right]
	\end{aligned}
	\end{equation*}
	for all $t_1,t_2\in[0,t_{0}]$, $u\in B_{t_{0}}$, which implies that $D(B_{t_{0}})$ is equicontinuous on $I$. Hence, $D(B_{t_{0}})$ is relatively compact by Theorem \ref{Arzela}, so that there is a function $u$ satisfying the equation \eqref{equation} on $S\times [0,t_{0}]$, by the Schauder fixed point theorem. Also, such $u$ satisfies the boundary condition $B[u]=0$. On the other hand, it is easy to see that $u$ is bounded and continuous on $\overline{S}\times [0,t_{0}]$. Moreover, $u(x,\cdot)$ is differentiable in $(0,t_{0})$, for each $x\in \overline{S}$ by the definition of $D$.
\end{proof}

Now, we discuss the comparison principles for the equation \eqref{equation}, in order to study the blow-up, extinctive occurrence, and global existence, which we begin in the next section.
\begin{Theorem}[Comparison Principle]\label{CP}
	Let $T>0$ ($T$ may be $+\infty$), $p>1$, $q\geq 1$, and $\lambda>0$. Suppose that real-valued functions $u(x,\cdot)$ and $v(x,\cdot)\in C[0,T)$ are differentiable in $(0,T)$ for each $x\in \overline{S}$ and satisfy
	\begin{equation}\label{cp equation}
	\begin{cases}
	u_{t}\left(x,t\right)-\Delta_{p,\omega}u\left(x,t\right)-\lambda|u(x,t)|^{q-1}u(x,t)  \\
	\geq v_{t}\left(x,t\right)-\Delta_{p,\omega}v\left(x,t\right)-\lambda|v(x,t)|^{q-1}v(x,t), & (x,t)\in S\times\left(0,T\right),\\
	\mu(x)\frac{\partial u}{\partial_{p} n}(x,t)+\sigma(x)\vert u(x,t)\vert^{p-2}u(x,t)\\
	\geq \mu(x)\frac{\partial u}{\partial_{p} n}(x,t)+\sigma(x)\vert u(x,t)\vert^{p-2}u(x,t), & (x,t)\in\partial S\times[0,T),\\
	u\left(x,0\right)\geq v\left(x,0\right), & x\in\overline{S}.
	\end{cases}
	\end{equation}
	Then $u(x,t)\geq v(x,t)$ for all $(x,t)\in \overline{S}\times [0,T)$.
\end{Theorem}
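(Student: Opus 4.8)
The plan is to prove the statement by a discrete maximum principle combined with an exponential perturbation, rather than by appealing directly to Theorem \ref{laplace theorem}: because $\Delta_{p,\omega}$ is nonlinear, $\Delta_{p,\omega}u-\Delta_{p,\omega}v\neq\Delta_{p,\omega}(u-v)$, so the two inequalities cannot be reduced to a single equation for $u-v$, and the argument must be kept pointwise. Throughout I would write $h:=v-u$ and aim to show $h\le 0$ on $\overline{S}\times[0,T)$. It suffices to fix an arbitrary $T'<T$ and prove $h\le 0$ on $\overline{S}\times[0,T']$; this reduces everything to a compact time interval on which $u$ and $v$ are bounded, so that for $q\ge 1$ the map $s\mapsto|s|^{q-1}s$ is Lipschitz on the relevant range with some constant $L$. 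This is precisely where the hypothesis $q\ge 1$ is used.

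For $\delta>0$ and a constant $K$ to be chosen, set $h_{\delta}(x,t):=h(x,t)-\delta e^{Kt}$. Since $h(\cdot,0)=v(\cdot,0)-u(\cdot,0)\le 0$ on $\overline{S}$, we have $h_{\delta}(\cdot,0)\le-\delta<0$. The strategy is to show $h_{\delta}<0$ on $\overline{S}\times[0,T']$ whenever $K>\lambda L$, and then let $\delta\to 0$. Suppose not. As $\overline{S}$ is finite, $t\mapsto\max_{x\in\overline{S}}h_{\delta}(x,t)$ is continuous and negative at $t=0$, so there is a least time $t_0\in(0,T']$ and a vertex $x_0$ with $h_{\delta}(x_0,t_0)=0$, with $h_{\delta}(\cdot,t_0)\le 0$, and with $h_{\delta}(x,t)<0$ for all $x$ and all $t<t_0$. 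Consequently $x_0$ maximizes $h(\cdot,t_0)$ over $\overline{S}$, and from $h_{\delta}(x_0,t)<0=h_{\delta}(x_0,t_0)$ for $t<t_0$ one gets the one-sided sign $\partial_t h_{\delta}(x_0,t_0)\ge 0$, i.e. $h_t(x_0,t_0)\ge\delta K e^{Kt_0}$.

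The engine of the proof is the discrete maximum principle coming from monotonicity of $s\mapsto|s|^{p-2}s$: if a vertex $x$ maximizes $h(\cdot,t_0)$, then $v(y)-v(x)\le u(y)-u(x)$ for every $y$, hence $|v(y)-v(x)|^{p-2}[v(y)-v(x)]\le|u(y)-u(x)|^{p-2}[u(y)-u(x)]$ termwise, which gives $\Delta_{p,\omega}v(x,t_0)\le\Delta_{p,\omega}u(x,t_0)$. If $x_0\in S$, I would feed this, together with the Lipschitz bound $|v|^{q-1}v-|u|^{q-1}u\le L(v-u)=L\delta e^{Kt_0}$ at $(x_0,t_0)$, into the differential inequality $h_t\le\Delta_{p,\omega}v-\Delta_{p,\omega}u+\lambda\big(|v|^{q-1}v-|u|^{q-1}u\big)$ to obtain $\delta Ke^{Kt_0}\le h_t(x_0,t_0)\le\lambda L\delta e^{Kt_0}$, which contradicts the choice $K>\lambda L$.

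The main obstacle is the case $x_0\in\partial S$, where no evolution equation holds and the previous estimate is unavailable; this must be handled through the mixed boundary condition. Applying the same monotonicity to $y\in S$ gives $\frac{\partial u}{\partial_{p} n}(x_0,t_0)\le\frac{\partial v}{\partial_{p} n}(x_0,t_0)$, and since $u(x_0)<v(x_0)$ also $|u(x_0)|^{p-2}u(x_0)<|v(x_0)|^{p-2}v(x_0)$; substituting these into $\mu(x_0)\big[\frac{\partial u}{\partial_{p} n}-\frac{\partial v}{\partial_{p} n}\big]+\sigma(x_0)\big[|u|^{p-2}u-|v|^{p-2}v\big]\ge 0$ forces both bracketed contributions to vanish. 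Using $\mu(x_0)+\sigma(x_0)>0$, this yields $\sigma(x_0)=0$, $\mu(x_0)>0$, and $\frac{\partial u}{\partial_{p} n}(x_0,t_0)=\frac{\partial v}{\partial_{p} n}(x_0,t_0)$; since every summand in the normal-derivative difference is signed, injectivity of $s\mapsto|s|^{p-2}s$ then propagates the maximum to each neighbour $y\in S$ of $x_0$, i.e. $h(y,t_0)=h(x_0,t_0)$. Because every boundary vertex is adjacent to at least one vertex of $S$, there is an interior maximizer $y_0\in S$, which is again a first-contact maximizer of $h_{\delta}$, so the interior argument applies at $y_0$ and produces the same contradiction. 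Hence $h_{\delta}<0$ throughout, and letting $\delta\to 0$ gives $v\le u$ on $\overline{S}\times[0,T']$; as $T'<T$ was arbitrary, the theorem follows. I expect the boundary propagation step, together with the careful justification of the one-sided time-derivative sign at the first contact time, to be the most delicate points to write out.
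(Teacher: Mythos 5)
Your proposal is correct, and its skeleton coincides with the paper's own argument: both proofs are pointwise extremum arguments that use the monotonicity of $s\mapsto|s|^{p-2}s$ to get the right sign on $\Delta_{p,\omega}u-\Delta_{p,\omega}v$ at an extremum of the difference, and both handle a boundary extremum in exactly the same way (if $\sigma(x_0)>0$ the boundary inequality is immediately violated; if $\sigma(x_0)=0$ then $\mu(x_0)>0$, the termwise-signed normal-derivative difference must vanish term by term, and injectivity of $s\mapsto|s|^{p-2}s$ propagates the extremum to an interior neighbour, where the interior argument finishes). Where you genuinely diverge is in the exponential device that absorbs the non-monotone reaction term, which is where $q\ge1$ enters in both proofs. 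The paper rescales the solutions multiplicatively, setting $\tilde u=e^{-2\lambda qLt}u$, $\tilde v=e^{-2\lambda qLt}v$ with $L=[\max_{\overline{S}\times[0,T']}\max\{|u|,|v|\}]^{q-1}$, linearizes the reaction by the mean value theorem with an intermediate point $\xi$, and inspects the \emph{global} minimum of $\tilde u-\tilde v$ over $\overline{S}\times[0,T']$: the contradiction comes from the zeroth-order coefficient $\lambda q[2L-|\xi|^{q-1}]\ge\lambda qL>0$, which makes the linearized expression strictly negative at a negative minimum. You instead keep $u,v$ unrescaled, subtract an additive barrier $\delta e^{Kt}$ with $K>\lambda L$, locate a \emph{first-contact} point, and let $\delta\to0$ at the end; strictness comes from the barrier's time derivative $\delta Ke^{Kt}$ beating the Lipschitz bound $\lambda L\delta e^{Kt}$ on the reaction. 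The trade-off: the paper's version is a one-shot argument with no limiting step, but hinges on the slightly delicate choice of the factor $2L$ in the rescaling and on the mean-value linearization; yours costs the first-contact bookkeeping and the $\delta\to0$ limit, but needs only a crude Lipschitz bound and cleanly separates the roles of the perturbation (strictness) and the maximum principle (signs), which arguably makes the one-sided time-derivative step easier to justify. Both are valid; neither fills a gap the other leaves.
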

\begin{proof}
	Let $T'>0$ be arbitrarily given with $T'<T$. Then by mean value theorem, for each $x\in S$ and $0\leq t \leq T'$,
	\begin{equation*}
		|u(x,t)|^{q-1}u(x,t)-|v(x,t)|^{q-1}v(x,t)=q|\xi(x,t)|^{q-1}[u(x,t)-v(x,t)]
	\end{equation*}
	for some $\xi(x,t)$ lying between $u(x,t)$ and $v(x,t)$. Now, let us define functions $\tilde{u}, \tilde{v}\,:\,\overline{S}\times\left[0,T'\right]\rightarrow\mathbb{R}$ by
	\[
	\tilde{u}(x,t):=e^{-2\lambda qLt}u\left(x,t\right),\,\,
	(x,t)\in\overline{S}\times[0,T'],
	\]
	\[
	\tilde{v}(x,t):=e^{-2\lambda qLt}v\left(x,t\right),\,\,
	(x,t)\in\overline{S}\times[0,T'],
	\]
	where $L:=[\,\max_{x\in\overline{S}}\max_{0\leq t\leq T'}\{|u(x,t)|,|v(x,t)|\}\, ]^{q-1}$. Then from \eqref{cp equation}, we have
	\begin{equation}\label{eq1-4}
	\begin{aligned}
	&\left[\tilde{u}_{t}\left(x,t\right)-\tilde{v}_{t}\left(x,t\right)\right]-e^{2\lambda qL(p-2)t}\left[\Delta_{p,\omega}\tilde{u}\left(x,t\right)-\Delta_{p,\omega}\tilde{v}\left(x,t\right)\right]\\ &+\lambda q\left[2L-|\xi(x,t)|^{q-1}\right]\left[\tilde{u}\left(x,t\right)-\tilde{v}\left(x,t\right)\right] \geq 0
	\end{aligned}
	\end{equation}
	for all $\left(x,t\right)\in S\times(0,T']$.

	We recall that $\tilde{u}(x,\cdot)$ and $\tilde{v}(x,\cdot)$ are continuous on $[0,T']$ for each $x\in \overline{S}$ and $\overline{S}$ is finite. Hence, we can find
	$\left(x_{0},t_{0}\right)\in\overline{S}\times\left[0,T'\right]$ such that
	\[
	\left(\tilde{u}-\tilde{v}\right)\left(x_{0},t_{0}\right)={\displaystyle \min_{x\in\overline{S}}\min_{0\leq t\leq T'}\left(\tilde{u}-\tilde{v}\right)\left(x,t\right)},
	\]
	which implies that
	\begin{equation}\label{123}
	\tilde{v}\left(y,t_{0}\right)-\tilde{v}\left(x_{0},t_{0}\right)\leq \tilde{u}\left(y,t_{0}\right)-\tilde{u}\left(x_{0},t_{0}\right),\,\,\,y\in\overline{S}.
	\end{equation}
	Then now we have only to show that $\left(\tilde{u}-\tilde{v}\right)\left(x_{0},t_{0}\right)\geq0$.
	
	Suppose that $\left(\tilde{u}-\tilde{v}\right)\left(x_{0},t_{0}\right)<0$, on the contrary. Assume  that $x_{0}\in\partial S$. Then we see that
	\begin{equation}\label{boundary comparison principle}
	\begin{aligned}
	0 \leq&\mu\left(x_{0}\right)\sum_{x\in S}\left[|\tilde{u}\left(x_{0},t_{0}\right)-\tilde{u}\left(x,t_{0}\right)|^{p-2}\left(\tilde{u}\left(x_{0},t_{0}\right)-\tilde{u}\left(x,t_{0}\right)\right)\right.\\&-\left.|\tilde{v}\left(x_{0},t_{0}\right)-\tilde{v}\left(x,t_{0}\right)|^{p-2}\left(\tilde{v}\left(x_{0},t_{0}\right)-\tilde{v}\left(x,t_{0}\right)\right)\right]\omega\left(x_{0},x\right)\\
	&+\sigma\left(x_{0}\right)\left(\tilde{u}\left(x_{0},t_{0}\right)-\tilde{v}\left(x_{0},t_{0}\right)\right)
	\end{aligned}
	\end{equation}
	Therefore, if $\sigma(x_{0})>0$ then the equation \eqref{boundary comparison principle} is negative, which leads a contradiction. If $\sigma(x_{0})=0$, then we have
	\[
	\tilde{u}(x_{0},t_{0})-\tilde{v}(x_{0},t_{0})=\tilde{u}(x,t_{0})- \tilde{v}(x,t_{0})
	\]
	for all $x\in S$. Hence, there exists $x_{1}\in S$ such that 
	\[
	\tilde{u}(x_{0},t_{0})-\tilde{v}(x_{0},t_{0})=\tilde{u}(x_{1},t_{0})- \tilde{v}(x_{1},t_{0}).
	\]
	Hence, we can always choose $x_{0}\in S$. Since $\tilde{u}(x,0)-\tilde{v}(x,0)\geq0$ on $\overline{S}$, we have $\left(x_{0},t_{0}\right)\in S\times(0,T']$. Then we obtain from \eqref{123} that
	\begin{equation}\label{eq1-5}
	\Delta_{p,\omega}\tilde{u}\left(x_{0},t_{0}\right)-\Delta_{p,\omega}\tilde{v}\left(x_{0},t_{0}\right)\geq 0
	\end{equation}
	and it follows from the differentiability of $\left(\tilde{u}-\tilde{v}\right)\left(x,t\right)$ in $(0,T']$ for each $x\in\overline{S}$ that
	\begin{equation}\label{eq1-6}
	\left(\tilde{u}_{t}-\tilde{v}_{t}\right)\left(x_{0},t_{0}\right)\leq 0.
	\end{equation}
	Combining \eqref{eq1-4}, \eqref{eq1-5}, and \eqref{eq1-6}, we obtain
	\[
	\begin{aligned}
	0\leq&\left[\tilde{u}_{t}\left(x,t\right)-\tilde{v}_{t}\left(x,t\right)\right]-e^{2\lambda qL(p-2)t}\left[\Delta_{p,\omega}\tilde{u}\left(x,t\right)-\Delta_{p,\omega}\tilde{v}\left(x,t\right)\right]\\ &+\lambda q\left[2L-|\xi(x,t)|^{q-1}\right]\left[\tilde{u}\left(x,t\right)-\tilde{v}\left(x,t\right)\right]<0,
	\end{aligned}
	\]
	which leads a contradiction. Therefore, $u\left(x,t\right)\geq v\left(x,t\right)$ for all $(x,t)\in\overline{S}\times[0,T)$, since $T'<T$ is arbitrarily given.
\end{proof}
When $p\geq 2$, we obtain a strong comparison principle as follows:
\begin{Theorem}[Strong Comparison Principle]\label{SCP}
	Let $T>0$ ($T$ may be $+\infty$), $p\geq 2$, $q\geq 1$, and $\lambda>0$. Suppose that real-valued functions $u(x,\cdot)$ and $v(x,\cdot)\in C[0,T)$ are differentiable in $(0,T)$ for each $x\in \overline{S}$ and satisfy the inequality \eqref{cp equation}. If $u_{0}(x^{*})>v_{0}(x^{*})$ for some $x^{*}\in S$, then $u(x,t)>v(x,t)$ for all $(x,t)\in S\cup \Gamma\times (0,T)$.
\end{Theorem}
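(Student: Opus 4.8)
The plan is to set $w:=u-v$, which is already nonnegative on $\overline{S}\times[0,T)$ by the weak Comparison Principle (Theorem \ref{CP}), and to upgrade this to strict positivity on $(S\cup\Gamma)\times(0,T)$ via a discrete strong-maximum-principle argument. The first step is to linearize the difference of the two differential inequalities. Writing $\Phi(s):=|s|^{p-2}s$, which is $C^{1}$ precisely because $p\ge 2$, the fundamental theorem of calculus gives, for $x\in S$,
\[
\Delta_{p,\omega}u(x,t)-\Delta_{p,\omega}v(x,t)=\sum_{y\in\overline{S}}c(x,y,t)\,[w(y,t)-w(x,t)]\,\omega(x,y),
\]
where $c(x,y,t):=(p-1)\int_{0}^{1}\big|(v(y,t)-v(x,t))+s[(u(y,t)-u(x,t))-(v(y,t)-v(x,t))]\big|^{p-2}\,ds\ge 0$. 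Likewise the mean value theorem used in Theorem \ref{CP} (valid since $q\ge1$) rewrites the reaction difference as $r(x,t)w(x,t)$ with $r(x,t)=\lambda q|\xi(x,t)|^{q-1}\ge0$, so that $w_{t}(x,t)\ge\sum_{y}c(x,y,t)[w(y,t)-w(x,t)]\omega(x,y)+r(x,t)w(x,t)$ on $S\times(0,T)$. The structural fact on which everything hinges, and the reason $p\ge 2$ is imposed, is that the degeneracy of $c$ is \emph{benign}: since the integrand is continuous and nonnegative, $c(x,y,t)=0$ forces $u(y,t)=u(x,t)$ and $v(y,t)=v(x,t)$, hence $w(y,t)=w(x,t)$. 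Moreover $c$ is bounded on $\overline{S}\times\overline{S}\times[0,T']$ for each $T'<T$ because $u,v$ are continuous.

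Next I would secure a positive seed for all time at $x^{*}$. Dropping the nonnegative terms $\sum_{y}c\,w(y,t)\omega\ge0$ and $r\,w\ge0$ in the inequality at $x=x^{*}$ yields $w_{t}(x^{*},t)\ge -M(t)\,w(x^{*},t)$ with $M(t):=\sum_{y}c(x^{*},y,t)\omega(x^{*},y)$ locally bounded, so an integrating-factor (Gr\"onwall) estimate gives $w(x^{*},t)\ge w(x^{*},0)\,e^{-\int_{0}^{t}M}>0$ for every $t\in[0,T)$.

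The core of the proof is an instantaneous spatial spreading at each fixed time. Fix $t\in(0,T)$ and suppose $x_{1}\sim x_{2}$ with $x_{1},x_{2}\in S$ and $w(x_{1},t)>0$; I claim $w(x_{2},t)>0$. If not, then $w(x_{2},t)=0$, and since $w(x_{2},\cdot)\ge0$ attains a minimum at the interior time $t$, differentiability gives $w_{t}(x_{2},t)=0$. The inequality then forces $\sum_{y}c(x_{2},y,t)w(y,t)\omega(x_{2},y)\le0$; every summand being nonnegative, each vanishes, in particular $c(x_{2},x_{1},t)\,w(x_{1},t)\,\omega(x_{2},x_{1})=0$. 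As $\omega(x_{2},x_{1})>0$ and $w(x_{1},t)>0$, this forces $c(x_{2},x_{1},t)=0$, whence by the benign degeneracy $w(x_{2},t)=w(x_{1},t)>0$, a contradiction. Combining this with the seed $w(x^{*},t)>0$ and the connectedness of $S$, I obtain $w(x,t)>0$ for every $x\in S$ and every $t\in(0,T)$.

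Finally I would extend positivity to $\Gamma$ using the boundary line of \eqref{cp equation}. For $z\in\Gamma$ (so $\mu(z)>0$) the same integral representation rewrites the boundary inequality as $\mu(z)\sum_{y\in S}\tilde c(z,y,t)[w(z,t)-w(y,t)]\omega(z,y)+\sigma(z)\hat c(z,t)w(z,t)\ge0$ with $\tilde c,\hat c\ge0$ inheriting the same benign degeneracy. If $w(z,t)=0$ at some $t\in(0,T)$, this reduces to $\mu(z)\sum_{y\in S}\tilde c(z,y,t)w(y,t)\omega(z,y)\le0$, so every summand vanishes; choosing a neighbour $y_{0}\in S$ of $z$ (which exists by the definition of $\partial S$) and invoking $w(y_{0},t)>0$ from the previous step forces $\tilde c(z,y_{0},t)=0$, hence $w(z,t)=w(y_{0},t)>0$, a contradiction. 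This gives strict positivity on all of $(S\cup\Gamma)\times(0,T)$. I expect the main obstacle to be exactly the possible degeneracy of the coefficient $c$ when $p>2$: the entire argument rests on the observation that wherever $c$ degenerates the two endpoint values of $w$ already coincide, so no genuine loss of ellipticity occurs, and this is precisely where the hypothesis $p\ge2$ (ensuring $\Phi\in C^{1}$ and $c$ bounded) is essential.
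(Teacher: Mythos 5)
Your proof is correct, and at the strategic level it coincides with the paper's own: the weak comparison principle (Theorem \ref{CP}) gives $w=u-v\ge 0$; a Gr\"onwall-type estimate at $x^{*}$, obtained by discarding the nonnegative neighbour and reaction terms, gives a positive seed $w(x^{*},t)>0$ for all $t\in(0,T)$; positivity then spreads through $S$ by connectedness; and the boundary inequality transfers positivity to $\Gamma$. The differences are in execution, and they work in your favor. The paper linearizes with the pointwise mean value theorem, producing coefficients $(p-1)|\zeta(x,y,t)|^{p-2}$, takes a global space-time minimum point $(x_{0},t_{0})$ of $\tau=u-v$, and from $\Delta_{p,\omega}u(x_{0},t_{0})-\Delta_{p,\omega}v(x_{0},t_{0})=0$ concludes that $\tau(y,t_{0})=0$ for every neighbour $y$ of $x_{0}$ --- silently passing over the possibility that the coefficient $|\zeta|^{p-2}$ itself vanishes when $p>2$. (The conclusion survives: $\zeta=0$ means the two increments have equal images under the injective map $s\mapsto|s|^{p-2}s$, hence the increments coincide and $\tau(y,t_{0})=\tau(x_{0},t_{0})=0$ anyway; but the paper never says this.) Your integral form of the mean value theorem makes exactly this point explicit: the ``benign degeneracy'' observation, that $c(x,y,t)=0$ forces $w(y,t)=w(x,t)$, is precisely the missing justification, and your localized spreading step (Fermat's theorem at an interior-in-time zero of $w(x_{2},\cdot)$, rather than a global minimum over $(S\cup\Gamma)\times(0,T']$) is a cleaner vehicle for it; it also sidesteps the paper's slightly loose induction, which propagates vanishing over all of $\overline{S}$ even though the interior equation is only available at vertices of $S$. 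So your writeup is not merely correct --- it repairs the one genuinely delicate point, the degeneracy of the linearized coefficients for $p>2$, that the paper's own proof leaves implicit.
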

\begin{proof}
	First, note that $u\geq v$ on $\overline{S}\times[0,T)$ by theorem \ref{CP}. Let $T'>0$ be arbitrarily given with $T'<T$. Define functions $\tau\,:\,\overline{S}\times\left[0,T'\right]\rightarrow\mathbb{R}$ by
	\[
	\tau\left(x,t\right) := u\left(x,t\right)-v\left(x,t\right),\,\,\left(x,t\right)\in\overline{S}\times\left[0,T'\right].
	\]
	Then $\tau\left(x,t\right)\geq0$ for all $\left(x,t\right)\in\overline{S}\times\left[0,T'\right]$. From the inequality \eqref{cp equation}, we have
	\begin{equation}\label{098}
	\begin{aligned}
	\tau_{t}(x^{*},t)&\geq \Delta_{p,\omega}u(x^{*},t)-\Delta_{p,\omega}v(x^{*},t)
	\end{aligned}
	\end{equation}
	for all $0<t\leq T'$. Then by the mean value theorem, for each $y\in \overline{S}$ and $0\leq t\leq T'$, it follows that
	\begin{equation}\label{111}
	\begin{aligned}
	&|u(y,t)-u(x^{*},t)|^{p-2}[u(y,t)-u(x^{*},t)]-|v(y,t)-v(x^{*},t)|^{p-2}[v(y,t)-v(x^{*},t)]\\&=(p-1)|\zeta(x^{*},y,t)|^{p-2}[\tau(y,t)-\tau(x,t)],
	\end{aligned}
	\end{equation}
	where $|\zeta(x^{*},y,t)|\leq 2M$ and $M=\max_{x\in\overline{S}}\max_{0\leq t\leq T'}\{|u(x,t)|,|v(x,t)|\}$.	Using \eqref{111}, the inequality \eqref{098} becomes
	\begin{equation*}
	\begin{aligned}
	 \tau_{t}\left(x^{*},t\right) \geq-d_{\omega}x^{*}(p-1)[2M]^{p-2}\tau\left(x^{*},t\right).
	\end{aligned}
	\end{equation*}
	This implies
	\begin{equation}\label{equation1}
	\tau\left(x^{*},t\right)\geq\tau\left(x^{*},0\right)e^{-\left(d_{\omega}x^{*}(p-1)[2M]^{p-2}\right)t}>0,\,\,t\in(0,T'],
	\end{equation}
	since $\tau\left(x^{*},0\right)>0$. Now, suppose there exists $(x_{0},t_{0})\in S\cup\Gamma\times (0,T']$ such that
	\begin{center}
		$\tau\left(x_{0},t_{0}\right)=\displaystyle \min_{x\in S\cup\Gamma,\,0<t\leq T'} \tau\left(x,t\right)=0$.
	\end{center}
	Case 1: $x_{0}\in S$.\\
	Since $\tau(x_{0},t_{0})\leq \tau(x,t)$ for all $(x,t)\in \overline{S}\times [0,T']$, We have
	\[
	\tau_{t}\left(x_{0},t_{0}\right)\leq0
	\]
	and
	\[
	\Delta_{p,\omega}u\left(x_{0},t_{0}\right)-\Delta_{p,\omega}v\left(x_{0},t_{0}\right)\geq0.
	\]
	Hence, from the inequality \eqref{098}, we obtain
	\begin{center}
		$0\leq\tau_{t}\left(x_{0},t_{0}\right)-\Delta_{p,\omega}u\left(x_{0},t_{0}\right)+\Delta_{p,\omega}v\left(x_{0},t_{0}\right)\leq0$.
	\end{center}
	Therefore, we have
	\[
	\Delta_{p,\omega}u\left(x_{0},t_{0}\right)-\Delta_{p,\omega}v\left(x_{0},t_{0}\right)=0,
	\]
	which implies that $\tau\left(y,t_{0}\right)=0$ for all $y\in \overline{S}$ with $y\sim x_{0}$.
	Now, for any $x \in \overline S,$ there exists a path
	\begin{displaymath}
	x_{0} \sim x_{1} \sim \cdots \sim x_{n} \sim x,
	\end{displaymath}
	since $\overline S$ is connected. By applying the same argument as above inductively we see that $\tau(x, t_{0})=0$ for every $x \in \overline S$, which is a contradiction to \eqref{equation1}.\\
	Case 2: $x_{0}\in \Gamma$.\\
	By the boundary condition in \eqref{cp equation}, we have
	\begin{equation*}
	\begin{aligned}
		&\mu(x)\left[\frac{\partial u}{\partial_{p} n}(x_{0},t_{0})-\frac{\partial u}{\partial_{p} n}(x_{0},t_{0})\right]\\&\geq\sigma(x)[\vert u(x_{0},t_{0})\vert^{p-2}u(x_{0},t_{0})-\vert u(x,t)\vert^{p-2}u(x_{0},t_{0})]=0,
	\end{aligned}
	\end{equation*}
	which follows that
	\begin{equation*}
		\sum_{x\in S}\left[ -|u(x,t_{0})|^{p-2}u(x,t_{0})+|v(x,t_{0})|^{p-2}v(x,t_{0}) \right]\omega(x,x_{0})\geq 0.
	\end{equation*}
	It means that there exists $x_{1}\in S$ with $x_{0}\sim x_{1}$ such that $\tau(x_{1},t_{0})=0$, which contradicts to Case 1. Hence, we finally obtain that $u\left(x,t\right)>v\left(x,t\right)$ for all $\left(x,t\right)\in S\times\left(0,T\right)$, since $T'<T$ is arbitrarily given.
\end{proof}
The rest of this section is devoted to investigate the following lemma which is basic result induced by the boundary condition $B[u]=0$.
\begin{Lemma}\label{boundarylemma}
	The solution $u$ to the equation \eqref{equation} satisfies that for all $z^{*}\in \partial S$ and $t\geq 0$, there exists $x^{*}\in S$ with $x^{*}\sim z^{*}$ such that $u(x^{*},t)\geq u(z^{*},t)$.
\end{Lemma}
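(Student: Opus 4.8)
The plan is to fix a boundary vertex $z^{*}$ and a time $t$, assume the negation of the claim, and extract a contradiction from the boundary condition \eqref{boundary}, exploiting the fact that the $p$-normal derivative becomes strictly positive exactly when $z^{*}$ strictly dominates all of its neighbours inside $S$. The one external input I need is that the solution is nonnegative: since $u_{0}\geq 0$ and $v\equiv 0$ is itself a solution of \eqref{equation} (it satisfies the interior equation trivially, meets $B[0]=0$, and has $v(\cdot,0)=0\leq u_{0}$), Theorem \ref{CP} applied with this $v$ yields $u\geq 0$ wherever $u$ is defined, at least when $q\geq 1$; in the sublinear range $0<q<1$ the same nonnegativity must be argued separately, since $s\mapsto|s|^{q-1}s$ is not Lipschitz at $0$ and Theorem \ref{CP} does not apply verbatim. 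I will use only that $u(x,t)\geq 0$ for all $(x,t)$.

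I would then fix $z^{*}\in\partial S$ and $t\geq 0$ and suppose, toward a contradiction, that $u(x,t)<u(z^{*},t)$ for every $x\in S$ with $x\sim z^{*}$; at least one such neighbour exists, because by the definition of $\partial S$ each boundary vertex is adjacent to a vertex of $S$. The decisive observation is that the sum defining $\frac{\partial u}{\partial_{p}n}(z^{*},t)$ ranges only over $y\in S$, and $\omega(z^{*},y)>0$ precisely when $y\sim z^{*}$; under the standing assumption each nonzero term equals $\big(u(z^{*},t)-u(y,t)\big)^{p-1}\omega(z^{*},y)>0$, so
\[
\frac{\partial u}{\partial_{p}n}(z^{*},t)=\sum_{y\in S}\left|u(z^{*},t)-u(y,t)\right|^{p-2}\big[u(z^{*},t)-u(y,t)\big]\,\omega(z^{*},y)>0 .
\]

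Writing $N$ for this strictly positive number, the boundary condition \eqref{boundary} reads $\sigma(z^{*})|u(z^{*},t)|^{p-2}u(z^{*},t)=-\mu(z^{*})N\leq 0$. Because $u(z^{*},t)\geq 0$ by the first step, the left-hand side is simultaneously $\geq 0$, so both sides vanish; since $N>0$ this forces $\mu(z^{*})=0$, and then $\sigma(z^{*})>0$ follows from $\mu+\sigma>0$. But $\sigma(z^{*})|u(z^{*},t)|^{p-2}u(z^{*},t)=0$ together with $\sigma(z^{*})>0$ gives $u(z^{*},t)=0$, whence every neighbour would satisfy $u(x,t)<0$, contradicting nonnegativity. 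Hence some $x^{*}\in S$ with $x^{*}\sim z^{*}$ must satisfy $u(x^{*},t)\geq u(z^{*},t)$, which is the claim.

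The sign-chase in the last paragraph is essentially forced and I expect no trouble there; the genuine subtlety is the preliminary nonnegativity of $u$ in the sublinear regime $0<q<1$, where I would supply an independent minimum-principle or approximation argument rather than invoke Theorem \ref{CP} directly. A secondary point to keep straight is that the strict-maximum hypothesis must be imposed only on the interior neighbours $y\in S$, so that it matches exactly the index set of the $p$-normal derivative and makes $N>0$ without any assumption on boundary-to-boundary comparisons.
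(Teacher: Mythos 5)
Your proof is correct and is essentially the paper's own argument run in contrapositive form: both hinge on combining the boundary condition \eqref{boundary} with nonnegativity of $u$ (obtained by comparison with the zero solution via Theorem \ref{CP}) to force the sum $\sum_{y\in S}\left|u(z^{*},t)-u(y,t)\right|^{p-2}\left[u(z^{*},t)-u(y,t)\right]\omega(z^{*},y)$ to be nonpositive, whence some neighbour $x^{*}\in S$ of $z^{*}$ must satisfy $u(x^{*},t)\geq u(z^{*},t)$. If anything, you are more careful than the paper, whose displayed identity divides by $\mu(z)$ and thus tacitly assumes $\mu(z)>0$ (your sign-chase handles $\mu(z^{*})=0$ explicitly), and whose appeal to Theorem \ref{CP} for nonnegativity carries the same restriction to $q\geq 1$ that you flag in the sublinear range $0<q<1$ --- a gap the paper shares rather than resolves.
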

\begin{proof}
	For all $z^{*}\in \partial S$ and $t\geq 0$, we have from the boundary condition $B[u]=0$ that
	\begin{equation*}
		\sum_{x\in S}|u(x,t)-u(z^{*},t)|^{p-2}[u(x,t)-u(z^{*},t)]\omega(x,y)=\frac{\sigma(z)}{\mu(z)}|u(z^{*},t)|^{p-2}u(z^{*},t)\geq 0,
	\end{equation*}
	by Theorem \ref{CP}. Hence, it is easy to see that there exists $x^{*}\in S$ with $x^{*}\sim z^{*}$ such that $u(x^{*},t)\geq u(z^{*},t)$, which completes the proof.
\end{proof}

\section{Main results and proofs}
	In this section, we will characterize the parameters $p$ and $q$ completely to see when the solution blows up or exist globally. Moreover, we consider extinctive solution in the global existence. From now on, by a solution to the equation \eqref{equation} we mean that it is a solution given in Theorem \ref{local existence} with a maximal interval of existence $[0,T)$.
	\begin{definition}[Blow-up]
		We say that a solution $u$ to the equation \eqref{equation} blows up in finite time $T>0$, if there exists $x\in S$ such that $\left\vert u\left(x,t\right)\right\vert \rightarrow +\infty$ as $t\nearrow T^{-}$, or equivalently, $\sum_{x\in S}|u(x,t)|\rightarrow +\infty$ as $t\nearrow T^{-}$.
	\end{definition}
	Before getting into the main results, we recall the following elementary inequalities.
	\begin{equation}\label{mainineq}
	\begin{aligned}
	&\left[\sum_{i=1}^{n} t_{i}\right]^{p} \leq n^{p-1}\sum_{i=1}^{n} t_{i}^{p}\leq n^{p-1}\left[\sum_{i=1}^{n}t_{i}\right]^{p}, \hspace{3mm}\text{when} \hspace{1mm} p\geq 1, \\& \left[\sum_{i=1}^{n}t_{i}\right]^{p}\leq \sum_{i=1}^{n} t_{i}^{p}\leq n^{1-p}\left[\sum_{i=1}^{n}t_{i}\right]^{p} , \hspace{3mm}\text{when} \hspace{1mm} 0<p<1,
	\end{aligned}
	\end{equation}
	where $t_{i}\geq 0$ for all $i=1,\cdots, n$.

	As seen in the Figure in the introduction, the solutions to the equation \eqref{equation} may blow up or exist globally, or vanish, depending on the parameters $\mu$, $\sigma$, $p$ and $q$. In particular, if $\sigma \equiv 0$ (the case of the Neumann boundary condition), then we obtain the following result.
	\begin{Theorem}\label{neumann}
		Assume that $\sigma \equiv 0$. Then the solution $u$ satisfies
		\begin{equation*}
		\sum_{x\in S}u_{t}(x,t)=\lambda\sum_{x\in S}|u(x,t)|^{q-1}u(x,t).
		\end{equation*}
		It means that the solution $u$ blows up in finite time $T$ if and only if $q>1$ for every $\lambda>0$ and nontrivial initial data $u_{0}$.
	\end{Theorem}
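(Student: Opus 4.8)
The plan is to first establish the summation identity, and then reduce the blow-up question to a scalar differential inequality for the quantity $\Phi(t):=\sum_{x\in S}u(x,t)$.

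For the identity I would sum the evolution equation over $x\in S$ and show that the diffusion term contributes nothing, i.e. $\sum_{x\in S}\Delta_{p,\omega}u(x,t)=0$. Writing $\overline{S}=S\cup\partial S$ and splitting the defining double sum of $\Delta_{p,\omega}$ accordingly, the part ranging over $S\times S$ is antisymmetric under the interchange $x\leftrightarrow y$ (because $\omega$ is symmetric while $|u(y)-u(x)|^{p-2}[u(y)-u(x)]$ is odd), so it vanishes. The remaining $S\times\partial S$ part reassembles, after interchanging the order of summation and using the symmetry of $\omega$, into $\sum_{z\in\partial S}\frac{\partial u}{\partial_{p} n}(z)$. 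Since $\sigma\equiv 0$ forces $\mu(z)>0$ for every $z\in\partial S$ (recall $\mu+\sigma>0$), the boundary condition $\mu(z)\frac{\partial u}{\partial_{p} n}(z)=0$ gives $\frac{\partial u}{\partial_{p} n}(z)=0$ on $\partial S$, so this term is $0$ as well. This yields the stated identity.

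Next I would record that the solution is nonnegative: comparing $u$ with the trivial solution $v\equiv 0$ (which solves the equation and satisfies the Neumann condition, with $u_{0}\geq 0=v_{0}$) gives $u\geq 0$ on $\overline{S}\times[0,T)$, so that $|u|^{q-1}u=u^{q}$ and $\Phi(t)\geq 0$. The identity then reads $\Phi'(t)=\lambda\sum_{x\in S}u(x,t)^{q}$. Applying the elementary inequalities \eqref{mainineq} with $n=|S|$ bounds the right-hand side from below when $q\geq 1$ and from above when $0<q\leq 1$:
\begin{equation*}
\lambda|S|^{1-q}\Phi(t)^{q}\leq\Phi'(t)\quad(q\geq 1),\qquad \Phi'(t)\leq\lambda|S|^{1-q}\Phi(t)^{q}\quad(0<q\leq 1).
\end{equation*}
Since nontriviality of $u_{0}$ gives $\Phi(0)>0$, the remaining work is to integrate these. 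When $q>1$, separating variables in $\Phi'\geq\lambda|S|^{1-q}\Phi^{q}$ yields $\Phi(t)^{1-q}\leq\Phi(0)^{1-q}-(q-1)\lambda|S|^{1-q}t$, whose right-hand side reaches $0$ at a finite time; hence $\Phi(t)\to+\infty$ and $u$ blows up in finite time $T$. Conversely, when $q=1$ the identity $\Phi'=\lambda\Phi$ gives $\Phi(t)=\Phi(0)e^{\lambda t}$, finite for all $t$, and when $0<q<1$ integrating $\Phi'\leq\lambda|S|^{1-q}\Phi^{q}$ gives $\Phi(t)\leq\bigl[\Phi(0)^{1-q}+(1-q)\lambda|S|^{1-q}t\bigr]^{1/(1-q)}$, again finite for every $t$; in both cases $\Phi$, and therefore each nonnegative $u(x,t)$, stays bounded on finite time intervals, so no blow-up occurs. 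This establishes the equivalence.

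I expect the main obstacle to be the identity step, specifically the bookkeeping that turns the interior-boundary part of $\sum_{x\in S}\Delta_{p,\omega}u$ exactly into $\sum_{z\in\partial S}\frac{\partial u}{\partial_{p} n}(z)$; the sign and index conventions in the definitions of $\Delta_{p,\omega}$ and $\frac{\partial u}{\partial_{p} n}$ must be matched carefully. A secondary point to handle with care is the nonnegativity of $u$ when $0<q<1$, where the reaction term fails to be Lipschitz at the origin and Theorem \ref{CP} (stated for $q\geq 1$) does not apply directly; there one argues nonnegativity by comparison with $v\equiv 0$ or through a maximum-principle argument, using Lemma \ref{boundarylemma} to locate the extremum of $u$ at an interior vertex.
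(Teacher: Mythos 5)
Your proposal is correct and follows essentially the same route as the paper: sum the equation over $S$, annihilate the diffusion term (the paper subtracts the boundary sum from the vanishing full sum, you split the double sum into its $S\times S$ and $S\times\partial S$ parts directly --- the same computation, since under $\sigma\equiv 0$ the boundary contribution is exactly $\sum_{z\in\partial S}\frac{\partial u}{\partial_{p} n}(z)=0$), then apply the elementary inequalities \eqref{mainineq} and integrate the resulting differential inequalities to conclude blow-up precisely when $q>1$. Your explicit treatment of nonnegativity when $0<q<1$ (where Theorem \ref{CP} does not apply because the reaction term is not Lipschitz at the origin) is a point the paper passes over silently, and is a refinement rather than a departure.
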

	\begin{proof}
		Summing up over $S$ to the equation \eqref{equation}, we have
		\begin{equation}\label{neu1}
		\begin{aligned}
			\sum_{x\in S}u_{t}(x,t)=&\sum_{x\in S}\Delta_{p,\omega}u(x,t)+\sum_{x\in S}|u(x,t)|^{q-1}u(x,t)\\
			=&\sum_{x\in \overline{S}}\Delta_{p,\omega}u(x,t)-\sum_{z\in\partial S}\Delta_{p,\omega}u(x,t)+\lambda\sum_{x\in S}|u(x,t)|^{q-1}u(x,t)\\=&\lambda\sum_{x\in S}|u(x,t)|^{q-1}u(x,t).
		\end{aligned}
		\end{equation}
		Therefore, applying the inequality \eqref{mainineq} to \eqref{neu1} and solving the differential inequality, we obtain
		\begin{equation}\label{neu2}
			\sum_{x\in S}u(x,t)\geq \lambda|S|^{1-q}\left[\sum_{x\in S}u(x,t)\right]^{q}
		\end{equation}
		for $q>1$ and
		\begin{equation}\label{neu3}
			\sum_{x\in S}u(x,t)\leq \lambda|S|^{1-q}\left[\sum_{x\in S}u(x,t)\right]^{q}
		\end{equation}
		for $q< 1$. Solving the differential inequality \eqref{neu2} and \eqref{neu3}, we obtain
		\begin{equation*}
			\sum_{x\in S}u(x,t)\geq \left[\frac{1}{\left[\sum_{x\in S}u_{0}(x)\right]^{1-q}-\lambda|S|^{1-q}(q-1)t }\right]^\frac{1}{q-1}
		\end{equation*}
		for $q>1$ and
		\begin{equation*}
		\sum_{x\in S}u(x,t)\leq \left[\lambda|S|^{1-q}(1-q)t+\left[\sum_{x\in S}u_{0}(x)\right]^{1-q} \right]^\frac{1}{1-q}
		\end{equation*}
		for $q< 1$. Moreover, we can easily obtain that
		\begin{equation*}
			\sum_{x\in S}u(x,t)=e^{\lambda t}\sum_{x\in S}u_{0}(x)
		\end{equation*}
		for $q=1$. Hence, the solution $u$ blows up in finite time $T$ if and only if $q>1$ for every $\lambda>0$ and nontrivial initial data $u_{0}$.
	\end{proof}
	\begin{Remark}
		Assume that $\sigma\equiv 0$ and $q>1$. Then the blow-up time $T$ can be estimated as
		\begin{equation*}
			0<T\leq \frac{\left[ \sum_{x\in S}u_{0}(x) \right]^{1-q}}{\lambda (q-1)|S|^{1-q}}.
		\end{equation*}
	\end{Remark}
	\begin{Remark}
		The proof in the Theorem \ref{neumann} also tells us a behavior of the growth of the solutions. More preciesly, if $q<1$, the the solutions $u$ may increase polynomially in $t$. If $q=1$, then the solution $u$ increase exponentially in $t$.
	\end{Remark}
	From now on, we discuss the main results with the assumption $\sigma\not\equiv 0$. We now start with the case $0<p-1<q$ and $q>1$.
	\begin{Theorem}\label{blowup}
		Assume that $0<p-1<q$, $q>1$, and $\sigma \not\equiv 0$. Then the solution $u$ to the equation \eqref{equation} blows up in finite time $T>0$, provided that $\max_{x\in S}u_{0}(x)>\left(\frac{\max_{x\in S}d_{\omega}x}{\lambda}\right)^{\frac{1}{q-p+1}}$.
	\end{Theorem}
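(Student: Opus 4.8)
The plan is to control the spatial maximum $m(t):=\max_{x\in S}u(x,t)$ and to show that it obeys an autonomous differential inequality forcing blow-up in finite time. Two preliminary reductions come first. Since $u_{0}\geq 0$ and the zero function solves \eqref{equation} with equality, it satisfies \eqref{cp equation}; because $q>1\geq 1$, the Comparison Principle (Theorem \ref{CP}) applies and gives $u(x,t)\geq 0$ on $\overline{S}\times[0,T)$. Moreover, by Lemma \ref{boundarylemma} every boundary vertex is dominated by an adjacent vertex of $S$, so $\max_{x\in S}u(x,t)=\max_{x\in\overline{S}}u(x,t)$ and the maximum is always attained at some interior vertex $x^{*}=x^{*}(t)\in S$, where the first line of \eqref{equation} holds.

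The core of the argument is a one-sided bound for the discrete $p$-Laplacian at a maximizer. Fixing $t$ and a maximizer $x^{*}\in S$, we have $0\leq u(y,t)\leq m(t)$ for all $y\in\overline{S}$, so that
\begin{equation*}
\Delta_{p,\omega}u(x^{*},t)=-\sum_{y\in\overline{S}}\bigl(u(x^{*},t)-u(y,t)\bigr)^{p-1}\omega(x^{*},y)\geq -m(t)^{p-1}\sum_{y\in\overline{S}}\omega(x^{*},y)=-D\,m(t)^{p-1},
\end{equation*}
where $D:=\max_{x\in S}d_{\omega}x$ and we used $p-1>0$ together with $0\leq u(x^{*},t)-u(y,t)\leq m(t)$. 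Since $\overline{S}$ is finite, $m$ is the maximum of finitely many functions differentiable in $t$, hence locally Lipschitz; at almost every $t$ its derivative equals $u_{t}(x^{*},t)$ for an active maximizer $x^{*}\in S$. As the displayed estimate and $u(x^{*},t)=m(t)\geq 0$ hold for every maximizer, the first line of \eqref{equation} yields
\begin{equation*}
m'(t)\geq \lambda\,m(t)^{q}-D\,m(t)^{p-1}=m(t)^{p-1}\bigl(\lambda\,m(t)^{q-p+1}-D\bigr)\quad\text{for a.e. }t.
\end{equation*}

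Finally, I would carry out an ODE comparison using the hypothesis. The assumption $m(0)=\max_{x\in S}u_{0}(x)>(D/\lambda)^{1/(q-p+1)}$ is exactly $\lambda\,m(0)^{q-p+1}-D>0$. Because $q-p+1>0$ and $p-1>0$, the function $g(s):=\lambda s^{q}-D s^{p-1}$ is strictly positive and increasing for $s\geq m(0)$; hence $m$ is nondecreasing and stays $\geq m(0)$ throughout $[0,T)$. Setting $\kappa:=1-(D/\lambda)\,m(0)^{-(q-p+1)}\in(0,1)$, one obtains $m'(t)\geq \lambda\kappa\,m(t)^{q}$ for a.e. $t$, and integrating (here $q>1$ is decisive, as it makes the relevant improper integral converge) gives
\begin{equation*}
m(t)\geq \bigl[\,m(0)^{1-q}-\lambda\kappa(q-1)t\,\bigr]^{-\frac{1}{q-1}}\longrightarrow +\infty\quad\text{as }t\nearrow T,\qquad T\leq \frac{m(0)^{1-q}}{\lambda\kappa(q-1)}<\infty,
\end{equation*}
which means that $u$ blows up in finite time $T$.

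The main obstacle is the middle step: one must ensure that the maximum of $u$ over $\overline{S}$ is realized at a vertex of $S$ (so that the equation and the favorable sign of $\Delta_{p,\omega}$ may be invoked), which is exactly where the nonnegativity of $u$ and Lemma \ref{boundarylemma} enter, and one must handle the nondifferentiability of $t\mapsto m(t)$ through the standard envelope argument for maxima of finitely many differentiable functions. Everything else is a routine differential-inequality computation.
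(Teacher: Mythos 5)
Your proposal is correct and follows essentially the same route as the paper: nonnegativity via Theorem \ref{CP}, Lemma \ref{boundarylemma} to place the maximizer in $S$, the estimate $\Delta_{p,\omega}u(x^{*},t)\geq -d_{\omega}x^{*}\,u^{p-1}(x^{*},t)$ at that maximizer, and integration of the resulting differential inequality $m'(t)\geq \lambda\kappa\, m(t)^{q}$. In fact you are slightly more careful than the paper at two points it glosses over — the a.e.\ differentiability of $t\mapsto m(t)$ via the envelope argument, and the justification that $m(t)\geq m(0)$ persists so the constant $\kappa$ (the paper's $C_{0}$) remains valid.
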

	\begin{proof}
		First, we note that the solution $u$ is nonnegative and exists uniquely by Theorem \ref{CP}. For each $t>0$, we can take $x^{*}\in S$ such that $\max_{x\in \overline{S}}u(x,t)=u(x^{*},t)$ by Lemma \ref{boundarylemma}. In fact, it is easy to see that $\max_{x\in \overline{S}}u(x,t)$ is differentiable for almost all $t>0$. Now, the equation \eqref{equation} can be written as
		\begin{equation}\label{blowup1}
		\begin{aligned}
			u_{t}(x^{*},t)=&\Delta_{p,\omega}u(x^{*},t)+\lambda u^{q}(x^{*},t) \\ =&\sum_{y\in \overline{S}}|u(y,t)-u(x^{*},t)|^{p-2}\left[u(y,t)-u(x^{*},t)\right]\omega(x,y)+\lambda u^{q}(x^{*},t) \\ \geq&-d_{\omega}x^{*}u^{p-1}(x^{*},t)+\lambda u^{q}(x^{*},t)\\ =& \lambda u^{q}(x^{*},t)\left[ 1-\frac{d_{\omega}x^{*}}{\lambda}u^{p-1-q}(x^{*},t) \right]
		\end{aligned}
		\end{equation}
		for almost all $t>0$. Therefore, if the initial data $u_{0}$ is so large in a sense that
		\begin{equation*}
			\max_{x\in S}u_{0}(x)>\left(\frac{\max_{x\in S}d_{\omega}x}{\lambda}\right)^{\frac{1}{q-p+1}},
		\end{equation*}
		then we obtain from \eqref{blowup1} that
		\begin{equation}\label{blowup2}
			u_{t}(x^{*},t)\geq C_{0}\lambda u^{q}(x^{*},t)
		\end{equation}
		for almost all $t>0$, where $C_{0}:=1-\frac{\max_{x\in S}d_{\omega}x}{\lambda}\max_{x\in S}u_{0}^{p-1-q}(x)$. Solving the differential inequality \eqref{blowup2}, we obtain
		\begin{equation*}
			u(x^{*},t)\geq \left[ \frac{1}{\left[\max_{x\in S}u_{0}(x)\right]^{1-q}-C_{0}\lambda(q-1)t } \right]^{\frac{1}{q-1}},
		\end{equation*}
		which implies that the solution $u$ blows up in finite time $0<T\leq\frac{\left[\max_{x\in S}u_{0}(x)\right]^{1-q}}{C_{0}\lambda(q-1)}$
	\end{proof}
	\begin{Remark}
		When the solution blows up in the above, the blow-up time $T$ can be estimated as
		\begin{equation*}
			0<T\leq \frac{\left[\max_{x\in S}u_{0}(x)\right]^{1-q}}{\left[1-\frac{\max_{x\in S}d_{\omega}x}{\lambda}\max_{x\in S}u_{0}^{p-1-q}(x)\right]\lambda(q-1)}.
		\end{equation*}
	\end{Remark}
	We now discuss the blow-up rate when the solution $u$ blows up in finite time $T$.
	\begin{Theorem}\label{rate}
		Assume that $0<p-1<q$ and $q>1$. Suppose the solution $u$ to the equation \eqref{equation} blows up in finite time $T$. Then the following statements are true:
		\begin{itemize}
			\item[(i)] $\max_{x\in S}u(x,t) \geq \left[ \lambda(q-1))(T-t) \right]^{-\frac{1}{q-1}}$, $0<t<T$.
			\item[(ii)] $\max_{x\in S}u(x,t) \leq \left[ \lambda(q-1)(T-t)-\alpha(T-t)^{\frac{2q-p}{q-1}} \right]^{-\frac{1}{q-1}}$, $0<t<T$.
			\item[(iii)] $\lim_{t\rightarrow T-} (T-t)^{\frac{1}{q-1}} \max_{x\in S}u(x,t)=\left[ \frac{1}{\lambda(q-1)} \right]^\frac{1}{q-1}$, $0<t<T$.
		\end{itemize}
	Here, $\alpha:=\lambda^{\frac{q-p+1}{q-1}}\max_{x\in S}d_{\omega}x(q-1)^{\frac{2q-p}{q-1}}$.
	\end{Theorem}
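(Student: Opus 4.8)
The plan is to reduce the whole problem to a single two-sided scalar differential inequality for $M(t):=\max_{x\in S}u(x,t)$ and then integrate its two sides separately: the upper inequality yields the lower bound (i), and the lower inequality yields the upper bound (ii). By Lemma \ref{boundarylemma} the maximum of $u(\cdot,t)$ over $\overline{S}$ is attained at an interior vertex $x^{*}=x^{*}(t)\in S$, so $M(t)=u(x^{*},t)$; as already noted in the proof of Theorem \ref{blowup}, $M$ is locally Lipschitz and hence differentiable for almost every $t$, with $M'(t)=u_{t}(x^{*}(t),t)$ at such $t$. At a maximizing vertex each difference $u(y,t)-u(x^{*},t)$ is nonpositive, so every summand of $\Delta_{p,\omega}u(x^{*},t)$ is nonpositive; together with the lower estimate for $\Delta_{p,\omega}u(x^{*},t)$ already obtained in \eqref{blowup1}, this yields the two-sided inequality
\begin{equation*}
\lambda M(t)^{q}-D\,M(t)^{p-1}\le M'(t)\le\lambda M(t)^{q}
\end{equation*}
for almost every $t\in(0,T)$, where I write $D:=\max_{x\in S}d_{\omega}x$.

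For statement (i) I integrate the right-hand inequality. Rewriting it as $\left(M^{1-q}\right)'\ge-\lambda(q-1)$ and integrating over $(t,s)\subset(0,T)$, then letting $s\nearrow T$ so that $M(s)\to+\infty$ and hence $M(s)^{1-q}\to0$ (this is where $q>1$ enters), gives $M(t)^{1-q}\le\lambda(q-1)(T-t)$, which is exactly (i) after raising to the power $-1/(q-1)$.

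For statement (ii) I integrate the left-hand inequality. With the substitution $y:=M^{1-q}$ it becomes $y'\le-\lambda(q-1)+(q-1)D\,y^{\beta}$, where $\beta:=\frac{q-p+1}{q-1}>0$ precisely because $p-1<q$. The key point is that the perturbation $D\,M^{p-1-q}=D\,y^{\beta}$ is now controlled by (i): since $y(t)\le\lambda(q-1)(T-t)$, one has $y^{\beta}\le[\lambda(q-1)(T-t)]^{\beta}$. Substituting this bound, integrating from $t$ to $T$ with $y(T^{-})=0$, and using $\beta+1=\frac{2q-p}{q-1}$ produces a lower bound $y(t)\ge\lambda(q-1)(T-t)-\alpha(T-t)^{(2q-p)/(q-1)}$, where $\alpha$ collects $D$, the factor $\lambda^{\beta}$, and the powers of $(q-1)$ coming out of the integration; reverting via $M=y^{-1/(q-1)}$ gives the claimed upper bound. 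I expect this step to be the main obstacle: besides the bookkeeping of the constant $\alpha$, one must verify that inserting the bound from (i) into the left-hand differential inequality genuinely sharpens it into the two-term estimate, and that the bracket $\lambda(q-1)(T-t)-\alpha(T-t)^{(2q-p)/(q-1)}$ stays positive on the relevant range (which holds near $T$ since $\frac{2q-p}{q-1}>1$, again because $p-1<q$) so that raising it to the power $-1/(q-1)$ is legitimate.

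Finally, (iii) follows by squeezing. Multiplying the bounds of (i) and (ii) by $(T-t)^{1/(q-1)}$, the lower bound becomes the constant $[\lambda(q-1)]^{-1/(q-1)}$, while factoring $\lambda(q-1)(T-t)$ out of the bracket in the upper bound leaves a factor $\left[1-\frac{\alpha}{\lambda(q-1)}(T-t)^{(q-p+1)/(q-1)}\right]^{-1/(q-1)}$, whose exponent $(q-p+1)/(q-1)$ on $(T-t)$ is positive, so it tends to $1$ as $t\nearrow T$. The squeeze theorem then delivers $\lim_{t\to T^{-}}(T-t)^{1/(q-1)}M(t)=\left[\frac{1}{\lambda(q-1)}\right]^{1/(q-1)}$.
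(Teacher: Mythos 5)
Your proposal is correct and follows essentially the same route as the paper: the upper inequality $M'(t)\le\lambda M(t)^{q}$ at a maximizing interior vertex integrates to (i), feeding (i) back into the lower inequality $M'(t)\ge\lambda M(t)^{q}-d_{\omega}x^{*}\,M(t)^{p-1}$ and integrating gives (ii), and (iii) follows by squeezing. The differences are cosmetic (your substitution $y=M^{1-q}$, your explicit remark on positivity of the bracket near $T$); in fact your integration produces the constant $\alpha\frac{q-1}{2q-p}\le\alpha$, slightly sharper than but consistent with the paper's stated $\alpha$.
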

	\begin{proof}
		$\left(i\right)$. Firstly, we note that the solution $u$ to the equation \eqref{equation} is positive on $S\cup\Gamma \times(0,T)$, by Theorem \ref{SCP}. As in the previous theorem, let $x^{*}\in S$ be a node such that $u\left(x^{*},t\right):={\max_{x\in S}u\left(x,t\right)}$ for each $t>0$. Then it follows from the equation \eqref{equation} that
		\begin{eqnarray*}
			u_{t}\left(x^{*},t\right) & \leq & \lambda u^{q}\left(x^{*},t\right),
		\end{eqnarray*}
		for almost all $t>0$. Then integrating from $t$ to $T$, we get
		\begin{eqnarray*}
			\lambda(T-t) & \geq & \int_{t}^{T}\frac{u_{t}\left(x^{*},s\right)}{u^{q}\left(x^{*},s\right)}ds\\
			& = & \int_{u\left(x^{*},t\right)}^{+\infty}\frac{ds}{s^{q}}\\
			& = & \frac{u^{1-q}(x^{*},t)}{q-1}.
		\end{eqnarray*}
		Hence, we obtain
		\[
		u(x^{*},t)\geq \left[\lambda(q-1)(T-t)\right]^{-\frac{1}{q-1}},\,\,0<t<T.
		\]
		$(ii)$. Since the solution $u$ is positive, we get
		\begin{eqnarray*}
			u_{t}\left(x^{*},t\right)  & \geq & -\sum_{y\in\overline{S}}u^{p-1}\left(x^{*},t\right)\omega\left(x^{*},y\right)+\lambda u^{q}\left(x^{*},t\right)\\
			& \geq & -d_{\omega}x^{*} u^{p-1}\left(x^{*},t\right)+\lambda u^{q}\left(x^{*},t\right)\\
			& = & u^{q}\left(x^{*},t\right)\left[\lambda-\max_{x\in S}d_{\omega}x u^{p-1-q}\left(x^{*},t\right)\right]\\
		\end{eqnarray*}
		for almost all $t>0$. Then it follows from $(i)$ that
		\[
		u_{t}(x^{*},t) \geq  u^{q}\left(x^{*},t\right)\left[\lambda-\max_{x\in S}d_{\omega}x \left[\lambda(q-1)(T-t)\right]^{\frac{q-p+1}{q-1}}\right].\\
		\]
		Integrating from $t$ to $T$, we get
		\[
		u(x^{*},t)\leq \left[\lambda(q-1)(T-t)-\alpha(T-t)^{\frac{2q-p}{q-1}}\right]^{-\frac{1}{q-1}}, \,\,0<t<T.
		\]
		where $\alpha:=\lambda^{\frac{q-p+1}{q-1}}\max_{x\in S}d_{\omega}x(q-1)^{\frac{2q-p}{q-1}}$.\\
		Finally, $(iii)$ can be easily obtained by $(i)$ and $(ii)$.
	\end{proof}
	\begin{Remark}
		In Theorem \ref{rate}, we can easily see that the blow-up rate does not depend on the boundary condition $B[u]=0$.
	\end{Remark}
	Now, we discuss the case $0<q<p-1$.
	\begin{Theorem}\label{bound}
		Assume that $0<q<p-1$ and $\sigma\not\equiv 0$. Then every solution $u$ to the equation \eqref{equation} is global. More precisely, every solution $u$ satisfies
		\begin{equation*}
		\begin{aligned}
		&\sum_{x\in S}u^{2}(x,t)\\&\leq \max\left\{\sum_{x\in S}u_{0}(x),\left[\frac{\lambda|S|^\frac{p-2}{2}}{\lambda_{p,0}}\right]^\frac{2}{p-q-1},\left[\frac{\lambda|S|^\frac{p-q-1}{2}}{\lambda_{p,0}}\right]^\frac{2}{p-q-1}, \left[\frac{\lambda|S|^\frac{1-q}{2}}{\lambda_{p,0}}\right]^\frac{2}{p-q-1}\right\}
		\end{aligned}
		\end{equation*}
		for all $t\geq 0$.
	\end{Theorem}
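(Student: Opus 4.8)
The plan is to control the quadratic energy $y(t):=\sum_{x\in S}u^{2}(x,t)$ and to show it can never escape to infinity. Recalling that $u\ge0$, I differentiate and substitute the equation \eqref{equation} to get
\[
y'(t)=2\sum_{x\in S}u(x,t)\Delta_{p,\omega}u(x,t)+2\lambda\sum_{x\in S}u^{q+1}(x,t).
\]
The first step is therefore to rewrite the diffusion term $\sum_{x\in S}u\,\Delta_{p,\omega}u$ as a manifestly dissipative quantity, after which the whole argument reduces to a differential inequality for $y$.

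The crux is a discrete Green identity on $S$ (not on $\overline{S}$). Splitting the identity of Theorem \ref{laplace theorem}(ii) into interior and boundary contributions and noting that for $z\in\partial S$ one has $-\Delta_{p,\omega}u(z)=\frac{\partial u}{\partial_{p}n}(z)$, I would obtain
\[
\sum_{x\in S}u(-\Delta_{p,\omega}u)=\tfrac12\sum_{x,y\in\overline{S}}|u(y)-u(x)|^{p}\omega(x,y)-\sum_{z\in\partial S}u(z)\frac{\partial u}{\partial_{p}n}(z).
\]
Inserting the boundary condition $B[u]=0$ then closes the boundary sum: on $\Gamma$ we have $\frac{\partial u}{\partial_{p}n}(z)=-\frac{\sigma(z)}{\mu(z)}|u(z)|^{p-2}u(z)$, while on $\partial S\setminus\Gamma$ (where $\mu=0$, $\sigma>0$) the condition forces $u(z)=0$, so that the boundary contribution becomes $+\sum_{z\in\Gamma}\frac{\sigma(z)}{\mu(z)}|u(z)|^{p}$. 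Hence $\sum_{x\in S}u(-\Delta_{p,\omega}u)$ is exactly the numerator of the Rayleigh quotient defining $\lambda_{p,0}$ in Lemma \ref{eigenvalue}; since moreover $u(\cdot,t)=0$ on $\partial S\setminus\Gamma$, we have $u(\cdot,t)\in\mathcal{A}$ whenever $u\not\equiv0$ in $S$, and Lemma \ref{eigenvalue} yields $\sum_{x\in S}u(-\Delta_{p,\omega}u)\ge\lambda_{p,0}\sum_{x\in S}u^{p}$. Because $\sigma\not\equiv0$ gives $\lambda_{p,0}>0$, this produces the key inequality $y'(t)\le-2\lambda_{p,0}\sum_{x\in S}u^{p}+2\lambda\sum_{x\in S}u^{q+1}$.

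Next I compare the two sums with $y(t)$ using the elementary inequalities \eqref{mainineq} (equivalently the monotonicity of $\ell^{r}$-norms). Since $0<q<p-1$ we always have $q+1<p$, so the dissipative term dominates the reaction term for large $y$; bounding $\sum u^{p}$ below by a multiple of $y^{p/2}$ and $\sum u^{q+1}$ above by a multiple of $y^{(q+1)/2}$ introduces powers of $|S|$ whose exponents depend only on whether $p\gtrless2$ and $q+1\gtrless2$. Only three of the four combinations are feasible (the case $p<2$ with $q\ge1$ is excluded because $q<p-1<1$), and they reproduce precisely the three thresholds $\big[\lambda|S|^{(p-2)/2}/\lambda_{p,0}\big]^{2/(p-q-1)}$, $\big[\lambda|S|^{(p-q-1)/2}/\lambda_{p,0}\big]^{2/(p-q-1)}$, and $\big[\lambda|S|^{(1-q)/2}/\lambda_{p,0}\big]^{2/(p-q-1)}$. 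In each admissible subcase there is a threshold $K$ among these with $\lambda_{p,0}\sum u^{p}\ge\lambda\sum u^{q+1}$, hence $y'(t)\le0$, whenever $y(t)\ge K$. A standard ODE-comparison argument then gives $y(t)\le\max\{y(0),K\}$ on the maximal interval of existence; taking the maximum over the three admissible thresholds (so as to state one formula valid for all $0<q<p-1$) together with the initial value yields the asserted bound, and boundedness of $y$ prevents $\max_{x}|u(x,t)|$ from exploding, so the solution extends to $[0,\infty)$ by Theorem \ref{local existence}.

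I expect the main obstacle to be the boundary bookkeeping in the discrete Green identity — correctly identifying $-\Delta_{p,\omega}u(z)$ with the $p$-normal derivative, inserting $B[u]=0$ so that the boundary term reproduces exactly the $\sigma/\mu$ contribution in the Rayleigh quotient, and checking $u(\cdot,t)\in\mathcal{A}$. Once that identity is in place, the power-mean comparison is routine, though case-heavy, and the passage from the differential inequality to the global a priori bound is elementary.
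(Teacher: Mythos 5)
Your proposal is correct and follows essentially the same route as the paper: the energy $y(t)=\sum_{x\in S}u^{2}(x,t)$, the discrete Green identity combined with the boundary condition to recover the Rayleigh-quotient numerator and hence $y'\le -2\lambda_{p,0}\sum u^{p}+2\lambda\sum u^{q+1}$ via Lemma \ref{eigenvalue}, then the power-mean inequalities \eqref{mainineq} in exactly the three admissible subcases ($q\ge1,\,p>2$; $q<1,\,p\ge2$; $q<1,\,p<2$) yielding the same three thresholds, and finally the $y'\le0$ above-threshold argument. The only cosmetic difference is that you write the bound with $y(0)=\sum_{x\in S}u_{0}^{2}(x)$, which is in fact the quantity the ODE argument produces (the theorem's statement has $\sum_{x\in S}u_{0}(x)$, apparently a slip in the paper).
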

	\begin{proof}
		Multiplying \eqref{equation} by $u$ and summing up over $\overline{S}$, we obtain from the boundary condition $B[u]=0$, Lemma \ref{laplace theorem}, and Lemma \ref{eigenvalue} that
		\begin{equation}\label{global1}
		\begin{aligned}
		&\frac{1}{2}\frac{d}{dt}\sum_{x\in S}u^{2}\left(x,t\right)\\=& \sum_{x\in \overline{S}}\left[\Delta_{p,\omega}u(x,t)\right] u(x,t)-\sum_{z\in\partial S}\left[\Delta_{p,\omega}u(x,t)\right] u(x,t)+\lambda\sum_{x\in S}|u(x,t)|^{q+1}\\=&-\frac{1}{2}\sum_{x\in\overline{S}} \left\vert u\left(y,t\right)-u\left(x,t\right)\right\vert^{p}\omega\left(x,y\right)-\sum_{z\in\Gamma}\frac{\sigma(z)}{\mu(z)}|u(z)|^{p} +\lambda \sum_{x\in S}\left\vert u\left(x,t\right)\right\vert^{q+1}\\\leq &-\lambda_{p,0}\sum_{x\in S}|u(x,t)|^{p}+\lambda\sum_{x\in S}|u(x,t)|^{q+1}.
		\end{aligned}
		\end{equation}
		Now, we divide this proof into 3 cases.
		\newline Case 1 : $q\geq 1$ and $p>2$.\\
		Applying the inequality \eqref{mainineq} to \eqref{global1}, we obtain
		\begin{equation*}
			\frac{1}{2}\frac{d}{dt}\sum_{x\in S}u^{2}(x,t)\leq \left[\sum_{x\in S}u^{2}(x,t)\right]^{\frac{q+1}{2}}\left[ \lambda-\lambda_{p,0}|S|^\frac{2-p}{2}\left[\sum_{x\in S}u^{2}(x,t)\right]^\frac{p-q-1}{2} \right].
		\end{equation*}
		Therefore, if there exists $t_{1}\in (0,\infty)$ such that
		$$
		\lambda<\lambda_{p,0}|S|^\frac{2-p}{2}\left[\sum_{x\in S}u^{2}(x,t_{1})\right]^\frac{p-q-1}{2},
		$$
		then we have $\frac{1}{2}\frac{d}{dt}\sum_{x\in S}u^{2}(x,t_{1})<0$. Also, if there exists $t_{2}\in (0,\infty)$ such that 
		$$
		\lambda\geq\lambda_{p,0}|S|^\frac{2-p}{2}\left[\sum_{x\in S}u^{2}(x,t_{2})\right]^\frac{p-q-1}{2},
		$$
		then it is easy to see that the solution $u$ must satisfy
		\begin{equation*}
			\lambda\geq\lambda_{p,0}|S|^\frac{2-p}{2}\left[\sum_{x\in S}u^{2}(x,t)\right]^\frac{p-q-1}{2}
		\end{equation*}
		for all $t\in[t_{2},\infty)$. That is to say, the solution $u$ satisfies
		\begin{equation}\label{global111}
		\sum_{x\in S}u^{2}(x,t)\leq \max\left\{\sum_{x\in S}u_{0}(x),\left[\frac{\lambda|S|^\frac{p-2}{2}}{\lambda_{p,0}}\right]^\frac{2}{p-q-1}\right\}
		\end{equation}
		for all $t\geq 0$.
		\newline Case 2 : $0<q<1$ and $p\geq 2$.\\
		Applying the inequality \eqref{mainineq} to \eqref{global1}, it follows that
		\begin{equation}
		\begin{aligned}
			&\frac{1}{2}\frac{d}{dt}\sum_{x\in S}u^{2}(x,t)\\&\leq \lambda|S|^\frac{1-q}{2} \left[\sum_{x\in S}u^{2}(x,t)\right]^\frac{q+1}{2}\left[ 1-\frac{\lambda_{p,0}}{\lambda|S|^\frac{p-q-1}{2}}\left[\sum_{x\in S}u^{2}(x,t)\right]^\frac{p-q-1}{2} \right].
		\end{aligned}
		\end{equation}
		Hence, by the same argument as Case 1, we have
		\begin{equation}\label{global112}
			\sum_{x\in S}u^{2}(x,t)\leq \max\left\{\sum_{x\in S}u_{0}(x),\left[\frac{\lambda|S|^\frac{p-q-1}{2}}{\lambda_{p,0}}\right]^\frac{2}{p-q-1}\right\}
		\end{equation}
		for all $t\geq 0$.\\
		\newline Case 3 : $0<q<1$ and $1<p<2$.\\
		Applying the inequality \eqref{mainineq} to \eqref{global1}, we have
		\begin{equation}
		\begin{aligned}
		&\frac{1}{2}\frac{d}{dt}\sum_{x\in S}u^{2}(x,t)\\&\leq \lambda|S|^\frac{1-q}{2} \left[\sum_{x\in S}u^{2}(x,t)\right]^\frac{q+1}{2}\left[ 1-\frac{\lambda_{p,0}}{\lambda|S|^\frac{1-q}{2}}\left[\sum_{x\in S}u^{2}(x,t)\right]^\frac{p-q-1}{2} \right].
		\end{aligned}
		\end{equation}
		Therefore, by the same argument as Case 1, we obtain
		\begin{equation}\label{global113}
		\sum_{x\in S}u^{2}(x,t)\leq \max\left\{\sum_{x\in S}u_{0}(x),\left[\frac{\lambda|S|^\frac{1-q}{2}}{\lambda_{p,0}}\right]^\frac{2}{p-q-1}\right\}
		\end{equation}
		for all $t\geq 0$.\\
		Combining \eqref{global111}, \eqref{global112}, and \eqref{global113}, we finally obtain that
		\begin{equation*}
		\begin{aligned}
			&\sum_{x\in S}u^{2}(x,t)\\&\leq \max\left\{\sum_{x\in S}u_{0}(x),\left[\frac{\lambda|S|^\frac{p-2}{2}}{\lambda_{p,0}}\right]^\frac{2}{p-q-1},\left[\frac{\lambda|S|^\frac{p-q-1}{2}}{\lambda_{p,0}}\right]^\frac{2}{p-q-1}, \left[\frac{\lambda|S|^\frac{1-q}{2}}{\lambda_{p,0}}\right]^\frac{2}{p-q-1}\right\}
		\end{aligned}
		\end{equation*}
		for all $t\geq 0$.
	\end{proof}
	Now we discuss the case $0<p-1\leq q$ with $q\leq 1$.
	\begin{Theorem}\label{global}
		Assume that $0<p-1\leq q$, $q\leq 1$, and $\sigma\not\equiv 0$. Then every solution $u$ to the equation \eqref{equation} is global.
	\end{Theorem}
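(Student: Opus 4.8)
The plan is to mimic the energy argument used for Theorem \ref{bound}, but to exploit the sign conditions in the opposite regime: here the reaction exponent is subcritical relative to the $L^{2}$-mass, in the sense that $q+1\le 2$, so the mass can grow at most like a solution of a scalar ODE $y'\le Cy^{(q+1)/2}$ whose exponent is $\le 1$, and such solutions never escape to infinity in finite time. First I would record that, by Theorem \ref{CP} applied with $v\equiv 0$, the solution is nonnegative, and set $y(t):=\sum_{x\in S}u^{2}(x,t)$. The standing hypotheses $0<p-1\le q\le 1$ force $1<p\le 2$ and, crucially, $q+1\le 2$.

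Next I would derive the energy identity exactly as in \eqref{global1}: multiplying \eqref{equation} by $u$, summing over $\overline{S}$, and using the boundary condition $B[u]=0$ together with Theorem \ref{laplace theorem}(ii), one obtains
\begin{equation*}
\frac{1}{2}\frac{d}{dt}\sum_{x\in S}u^{2}(x,t)=-\frac{1}{2}\sum_{x,y\in\overline{S}}|u(y,t)-u(x,t)|^{p}\omega(x,y)-\sum_{z\in\Gamma}\frac{\sigma(z)}{\mu(z)}|u(z,t)|^{p}+\lambda\sum_{x\in S}u^{q+1}(x,t).
\end{equation*}
The first two terms on the right are nonpositive and may simply be discarded; notice that, unlike in Theorem \ref{bound}, the eigenvalue estimate of Lemma \ref{eigenvalue} is not even needed. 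I would then apply the elementary inequality \eqref{mainineq} with exponent $(q+1)/2\le 1$ to the nonnegative numbers $u^{2}(x,t)$, which yields $\sum_{x\in S}u^{q+1}\le |S|^{(1-q)/2}\,y^{(q+1)/2}$. Combining these gives the scalar differential inequality $y'(t)\le 2\lambda|S|^{(1-q)/2}\,y(t)^{(q+1)/2}$ for almost every $t$.

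Finally I would integrate this inequality. Because the exponent $(q+1)/2$ does not exceed $1$, the comparison ODE has only globally defined solutions: when $q<1$, separation of variables gives $y(t)\le\bigl[y(0)^{(1-q)/2}+\lambda(1-q)|S|^{(1-q)/2}t\bigr]^{2/(1-q)}$, while for $q=1$ one gets $y(t)\le y(0)e^{2\lambda t}$. In either case $y(t)$ is finite for every finite $t$, hence $\sum_{x\in S}|u(x,t)|\le\sqrt{|S|\,y(t)}$ remains bounded on every bounded time interval.

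The one genuine subtlety, which I would treat carefully, is the passage from this a priori bound to global existence. I would invoke the maximal existence interval $[0,T)$ furnished by Theorem \ref{local existence} and argue that a finite $T$ would, by the definition of blow-up, force $\sum_{x\in S}|u(x,t)|\to+\infty$ as $t\nearrow T^{-}$, contradicting the finiteness of $y$ up to $t=T$. Hence $T=+\infty$ and $u$ is global. Everything else is a routine energy estimate; the essential structural observation is simply that $q\le 1$ places the reaction term below the quadratic scaling of the $L^{2}$-mass, so that the diffusion and boundary dissipation need only be used through their nonpositivity.
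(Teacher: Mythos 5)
Your proposal is correct and follows essentially the same route as the paper: the same energy identity obtained by multiplying by $u$ and summing over $\overline{S}$, discarding the nonpositive diffusion and boundary terms, applying the elementary inequality \eqref{mainineq} with exponent $\frac{q+1}{2}\le 1$, and integrating the resulting scalar differential inequality to get the polynomial ($q<1$) or exponential ($q=1$) bound. Your explicit closing argument that the a priori bound on $\sum_{x\in S}u^{2}(x,t)$ rules out finite-time blow-up and hence forces $T=+\infty$ is a detail the paper leaves implicit, but it is the intended conclusion.
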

	\begin{proof}
		Multiplying \eqref{equation} by $u$ and summing up over $\overline{S}$, we obtain from the boundary condition $B[u]=0$ and Lemma \ref{laplace theorem} that
		\begin{equation}\label{glo1}
		\begin{aligned}
		&\frac{1}{2}\frac{d}{dt}\sum_{x\in S}u^{2}\left(x,t\right)\\=& \sum_{x\in \overline{S}}\left[\Delta_{p,\omega}u(x,t)\right] u(x,t)-\sum_{z\in\partial S}\left[\Delta_{p,\omega}u(x,t)\right] u(x,t)+\lambda\sum_{x\in \overline{S}}|u(x,t)|^{q+1}\\=&-\frac{1}{2}\sum_{x\in\overline{S}} \left\vert u\left(y,t\right)-u\left(x,t\right)\right\vert^{p}\omega\left(x,y\right)-\sum_{z\in\Gamma}\frac{\sigma(z)}{\mu(z)}|u(z)|^{p} +\lambda \sum_{x\in S}\left\vert u\left(x,t\right)\right\vert^{q+1}\\\leq & \lambda\sum_{x\in S}|u(x,t)|^{q+1}.
		\end{aligned}
		\end{equation}
		Therefore, applying the inequality \eqref{mainineq} to \eqref{glo1}, we obtain
		\begin{equation*}
			\frac{d}{dt}\sum_{x\in S}u^{2}(x,t)\leq 2 \lambda|S|^\frac{1-q}{2}\left[\sum_{x\in S}u^{2}(x,t)\right]^{\frac{q+1}{2}},
		\end{equation*}
		which implies that
		\begin{equation*}
			\sum_{x\in S}u^{2}(x,t)\leq \left[ \left[\sum_{x\in S}u_{0}^{2}(x)\right]^\frac{1-q}{2}+\lambda|S|^\frac{1-q}{2}(1-q)t \right]^\frac{2}{1-q}
		\end{equation*}
		for $q<1$ and
		\begin{equation*}
			\sum_{x\in S}u^{2}(x,t)\leq \left[\sum_{x\in S}u_{0}^{2}(x)\right] e^{2\lambda t}
		\end{equation*}
		for $q=1$.
	\end{proof}
	\begin{Remark}
		The proof in the Theorem \ref{global} also tells us a behavior of the growth of the solutions. More preciesly, if $q<1$, the the solutions $u$ may increase polynomially in $t$. If $q=1$, then the solution $u$ may increase exponentially in $t$.
	\end{Remark}
	Now we discuss the case $0<p-1<q$ with $1<p<2$ to investigate the extinctive solutions.
	\begin{Theorem}\label{extinction}
		Assume that $0<p-1<q$, $1<p<2$, and $\sigma\not\equiv 0$. Then every solution $u$ to the equation \eqref{equation} vanishes in finite time $T$, provided that the initial data $u_{0}$ is so small that $\lambda |S|^\frac{1}{2} \left[\sum_{x\in S}u_{0}^{2}(x)  \right]^{\frac{q-p+1}{2}} < \lambda_{p,0}.$
	\end{Theorem}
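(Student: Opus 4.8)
The plan is to use the energy method already established in the proof of Theorem \ref{bound}. Write $y(t):=\sum_{x\in S}u^{2}(x,t)$. Multiplying \eqref{equation} by $u$, summing over $\overline{S}$, and invoking the boundary condition $B[u]=0$, Theorem \ref{laplace theorem}, and the variational characterization of $\lambda_{p,0}$ in Lemma \ref{eigenvalue}, I would derive exactly the inequality \eqref{global1}, namely
\begin{equation*}
\frac{1}{2}y'(t)\leq-\lambda_{p,0}\sum_{x\in S}|u(x,t)|^{p}+\lambda\sum_{x\in S}|u(x,t)|^{q+1}.
\end{equation*}
Here the eigenvalue bound is legitimate because the boundary condition forces $u=0$ on $\partial S\setminus\Gamma$ (where $\mu=0$ and hence $\sigma>0$), so that $u\in\mathcal{A}$. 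This reduces the problem to controlling the two sums on the right in terms of $y(t)$ alone.

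Next I would estimate those sums using the elementary inequalities \eqref{mainineq} applied with $t_{i}=u^{2}(x_{i},t)$. Since $1<p<2$, the exponent $p/2$ lies in $(0,1)$, so \eqref{mainineq} gives the lower bound $\sum_{x\in S}|u|^{p}=\sum_{x\in S}(u^{2})^{p/2}\geq y^{p/2}$. For the reaction term I would treat $q\geq1$ and $0<q<1$ separately: when $q\geq1$ the exponent $(q+1)/2\geq1$ and \eqref{mainineq} gives $\sum_{x\in S}|u|^{q+1}\leq y^{(q+1)/2}$, whereas when $0<q<1$ it gives $\sum_{x\in S}|u|^{q+1}\leq|S|^{(1-q)/2}y^{(q+1)/2}$; since $|S|\geq1$, in either case $\sum_{x\in S}|u|^{q+1}\leq|S|^{1/2}y^{(q+1)/2}$, which is where the factor $|S|^{1/2}$ in the hypothesis comes from. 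Combining these yields the scalar inequality
\begin{equation*}
\frac{1}{2}y'(t)\leq y^{p/2}(t)\Bigl[-\lambda_{p,0}+\lambda|S|^{1/2}y^{(q-p+1)/2}(t)\Bigr],
\end{equation*}
where the exponent $(q-p+1)/2$ is positive because $q>p-1$.

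Finally I would exploit the smallness hypothesis. The condition $\lambda|S|^{1/2}\bigl[\sum_{x\in S}u_{0}^{2}(x)\bigr]^{(q-p+1)/2}<\lambda_{p,0}$ says precisely that the bracket is negative at $t=0$, so $y'(0)<0$. A continuity (invariance) argument then shows $y(t)\leq y(0)$ throughout the interval of existence: at any time where $y(t)=y(0)$ the bracket is still negative, so $y$ cannot cross above its initial value. Consequently $\lambda|S|^{1/2}y^{(q-p+1)/2}(t)\leq\lambda_{p,0}-\delta$ for the fixed gap $\delta:=\lambda_{p,0}-\lambda|S|^{1/2}y(0)^{(q-p+1)/2}>0$, giving $y'(t)\leq-2\delta\,y^{p/2}(t)$. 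Because $y$ stays bounded, $\sum_{x\in S}|u|\leq|S|^{1/2}y^{1/2}$ stays bounded, so the solution cannot blow up and extends globally by Theorem \ref{local existence}. Integrating $y'\leq-2\delta y^{p/2}$ and using crucially that $1-p/2>0$ shows $y^{1-p/2}$ decreases at least linearly and reaches $0$ at some finite $T\leq\frac{y(0)^{1-p/2}}{2\delta(1-p/2)}$, after which $y\equiv0$; that is, $u$ vanishes in finite time.

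The main obstacle I anticipate is the invariance step coupled with global existence: one must rule out that the solution leaves the region $\{y\leq y(0)\}$, where the differential inequality has the favorable sign, before extinction occurs, since for the very same parameter range $0<p-1<q$ large data produce blow-up by Theorem \ref{blowup}. The role of the hypothesis $1<p<2$ (equivalently $p/2<1$) is likewise essential and not merely technical: it is exactly what turns the decay estimate into \emph{finite-time} extinction rather than mere asymptotic decay to zero.
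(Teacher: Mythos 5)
Your proposal is correct and takes essentially the same route as the paper's proof: the same energy identity for $\sum_{x\in S}u^{2}(x,t)$, the same eigenvalue bound from Lemma \ref{eigenvalue}, the same elementary inequalities \eqref{mainineq}, and integration of the resulting differential inequality using $1<p<2$ to get finite-time extinction. The only differences are minor: you merge the paper's two cases ($0<q<1$ and $q\geq 1$) into one via the coarser factor $|S|^{1/2}$, and you spell out the invariance step $y(t)\leq y(0)$ that the paper asserts implicitly when it claims its differential inequality holds for all $t>0$.
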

	\begin{proof}
		Multiplying \eqref{equation} by $u$ and summing up over $\overline{S}$, we obtain from the boundary condition $B[u]=0$, Lemma \ref{laplace theorem}, and Lemma \ref{eigenvalue} that
		\begin{equation}\label{extinc1}
		\begin{aligned}
		&\frac{1}{2}\frac{d}{dt}\sum_{x\in S}u^{2}\left(x,t\right)\\=& \sum_{x\in \overline{S}}\left[\Delta_{p,\omega}u(x,t)\right] u(x,t)-\sum_{z\in\partial S}\left[\Delta_{p,\omega}u(x,t)\right] u(x,t)+\lambda\sum_{x\in \overline{S}}|u(x,t)|^{q+1}\\=&-\frac{1}{2}\sum_{x\in\overline{S}} \left\vert u\left(y,t\right)-u\left(x,t\right)\right\vert^{p}\omega\left(x,y\right)-\sum_{z\in\Gamma}\frac{\sigma(z)}{\mu(z)}|u(z)|^{p} +\lambda \sum_{x\in S}\left\vert u\left(x,t\right)\right\vert^{q+1}\\\leq &-\lambda_{p,0}\sum_{x\in S}|u(x,t)|^{p}+\lambda\sum_{x\in S}|u(x,t)|^{q+1}.
		\end{aligned}
		\end{equation}
		Now, we divide this proof into 2 cases.
		\newline Case 1 : $0<q<1$.
		\newline Applying the inequality \eqref{mainineq} to \eqref{extinc1}, we obtain
		\begin{equation*}
		\begin{aligned}
			\frac{d}{dt}\sum_{x\in S}u^{2}(x,t)&\leq -2\lambda_{p,0}\left[\sum_{x\in S}u^{2}(x,t)\right]^{\frac{p}{2}}+2\lambda|S|^\frac{1-q}{2}\left[\sum_{x\in S}u^{2}(x,t)\right]^\frac{q+1}{2}\\&\leq -2\left[ \sum_{x\in S}u^{2}(x,t) \right]^\frac{p}{2}\left[ \lambda_{p,0}-\lambda|S|^\frac{1-q}{2}\left[\sum_{x\in S}u^{2}(x,t)\right]^\frac{q-p+1}{2} \right].
		\end{aligned}
		\end{equation*}
		Therefore, if the initial data is so small that
		\begin{equation*}
			\lambda_{p,0}-\lambda|S|^\frac{1-q}{2}\left[\sum_{x\in S}u_{0}^{2}(x)\right]^\frac{q-p+1}{2}>0,
		\end{equation*}
		then we arrive at
		\begin{equation}\label{extinc2}
			\frac{d}{dt}\sum_{x\in S}u^{2}(x,t)\leq -C_{3}\left[ \sum_{x\in S}u^{2}(x,t) \right]^\frac{p}{2}
		\end{equation}
		for all $t>0$, where $C_{3}:=2\lambda_{p,0}-2\lambda|S|^\frac{1-q}{2}\left[\sum_{x\in S}u_{0}^{2}(x)\right]^\frac{q-p+1}{2}$. Hence, solving the differential inequality \eqref{extinc2}, we obtain
		\begin{equation*}
			\sum_{x\in S}u^{2}(x,t)\leq \left[ \left[\sum_{x\in S}u_{0}^{2}(x)\right]^\frac{2-p}{2} -\frac{(2-p)C_{3}t}{2} \right]_{+}^\frac{2}{2-p},
		\end{equation*}
		which implies that the solution $u$ vanishes in finite time $0<T\leq\frac{2\left[\sum_{x\in S}u_{0}^{2}\left(x\right)\right]^\frac{2-p}{2}}{(2-p)C_{3}}$.
		\newline Case2 2 : $q\geq 1$.
		\newline Applying the inequality \eqref{mainineq} to \eqref{extinc1}, we obtain
		\begin{equation*}
		\begin{aligned}
		\frac{d}{dt}\sum_{x\in S}u^{2}(x,t)&\leq -2\lambda_{p,0}\left[\sum_{x\in S}u^{2}(x,t)\right]^{\frac{p}{2}}+2\lambda\left[\sum_{x\in S}u^{2}(x,t)\right]^\frac{q+1}{2}\\&\leq -2\left[ \sum_{x\in S}u^{2}(x,t) \right]^\frac{p}{2}\left[ \lambda_{p,0}-\lambda\left[\sum_{x\in S}u^{2}(x,t)\right]^\frac{q-p+1}{2} \right].
		\end{aligned}
		\end{equation*}
		Therefore, if the initial data is so small that
		\begin{equation*}
		\lambda_{p,0}-\lambda\left[\sum_{x\in S}u_{0}^{2}(x)\right]^\frac{q-p+1}{2}>0,
		\end{equation*}
		then we arrive at
		\begin{equation}\label{extinc3}
		\frac{d}{dt}\sum_{x\in S}u^{2}(x,t)\leq -C_{4}\left[ \sum_{x\in S}u^{2}(x,t) \right]^\frac{p}{2}
		\end{equation}
		for all $t>0$, where $C_{4}:=2\lambda_{p,0}-2\lambda\left[\sum_{x\in S}u_{0}^{2}(x)\right]^\frac{q-p+1}{2}$. Hence, solving the differential inequality \eqref{extinc2}, we obtain
		\begin{equation*}
		\sum_{x\in S}u^{2}(x,t)\leq \left[ \left[\sum_{x\in S}u_{0}^{2}(x)\right]^\frac{2-p}{2} -\frac{(2-p)C_{4}t}{2} \right]_{+}^\frac{2}{2-p},
		\end{equation*}
		which implies that the solution $u$ vanishes in finite time $0<T\leq\frac{2\left[\sum_{x\in S}u_{0}^{2}\left(x\right)\right]^\frac{2-p}{2}}{(2-p)C_{4}}$.
	\end{proof}
	\begin{Remark}
		When the solutions extinct in the above, the extinction time $T$ can be estimated as
		\begin{equation*}
		0<T\leq \frac{\left[\sum_{x\in S}u_{0}^{2}\left(x\right)\right]^\frac{2-p}{2}}{(2-p)\left[\lambda_{p,0}-\lambda|S|^\frac{1}{2}\left[\sum_{x\in S}u_{0}^{2}(x)\right]^\frac{q-p+1}{2} \right]}.
		\end{equation*}
	\end{Remark}
	Now, we will discuss the critical case $p-1=q>0$. Firstly, we investigate the case $p-1=q>1$.
	\begin{Theorem}\label{criticalblow}
		Assume that $p-1=q>1$ and $\sigma\not\equiv 0$. Then the solution $u$ to the equation \eqref{equation} satisfies the following statements.
		\begin{itemize}
			\item[(i)] If $\lambda >\lambda_{p,0}$, then the solution $u$ blows up in finite time $T$ for every $\lambda>0$ and nontrivial initial data $u_{0}$.
			\item[(ii)] If $\lambda = \lambda_{p,0}$, then solution $u$ exists globally. Moreover, the solution $u$ has an upper bound. 
			\item[(iii)] If $\lambda < \lambda_{p,0}$, then solution $u$ exists globally. Moreover, the solution $u$ may decrease polynomially in $t$.
		\end{itemize}
	\end{Theorem}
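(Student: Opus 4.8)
The plan is to treat the three regimes separately, exploiting the fact that in the critical case $q+1=p$ the reaction term $\lambda|u|^{q-1}u$ and the $p$-Dirichlet energy are homogeneous of the same degree $p$, so that the threshold is governed precisely by the first eigenvalue $\lambda_{p,0}$ of Lemma \ref{eigenvalue}. Parts (ii) and (iii) I expect to be routine refinements of the energy computations already carried out in the earlier theorems, whereas part (i) is the genuine difficulty and will be handled by comparison with a separable subsolution built from the first eigenfunction $\phi_{0}$.

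For (ii) and (iii), I would multiply \eqref{equation} by $u$ and sum over $\overline{S}$, using the boundary condition $B[u]=0$, Theorem \ref{laplace theorem}, and the variational characterization of $\lambda_{p,0}$ in Lemma \ref{eigenvalue}, exactly as in \eqref{global1}. Since $q+1=p$, this collapses to the single clean inequality $\frac{1}{2}\frac{d}{dt}\sum_{x\in S}u^{2}(x,t)\leq(\lambda-\lambda_{p,0})\sum_{x\in S}|u(x,t)|^{p}$. When $\lambda=\lambda_{p,0}$ the right-hand side is nonpositive, so $\sum_{x\in S}u^{2}$ is nonincreasing and bounded by $\sum_{x\in S}u_{0}^{2}$; boundedness then yields global existence and the asserted upper bound, proving (ii). When $\lambda<\lambda_{p,0}$, I would insert the lower bound $\sum_{x\in S}|u|^{p}\geq|S|^{1-p/2}\left[\sum_{x\in S}u^{2}\right]^{p/2}$ from \eqref{mainineq} (legitimate since $p>2$ here), obtaining $y'\leq -Cy^{p/2}$ for $y:=\sum_{x\in S}u^{2}$ and some $C>0$; integrating this differential inequality gives global existence together with polynomial decay of order $t^{-2/(p-2)}$, proving (iii).

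The hard part is the blow-up statement (i). My approach is to construct the separable subsolution $\underline{u}(x,t):=g(t)\phi_{0}(x)$, where $\phi_{0}>0$ is the first eigenfunction. Using the $(p-1)$-homogeneity of $\Delta_{p,\omega}$ together with $-\Delta_{p,\omega}\phi_{0}=\lambda_{p,0}\phi_{0}^{p-1}$ and $q=p-1$, the defect of $\underline{u}$ on $S$ reduces to $\underline{u}_{t}-\Delta_{p,\omega}\underline{u}-\lambda\underline{u}^{q}=g'\phi_{0}-(\lambda-\lambda_{p,0})g^{p-1}\phi_{0}^{p-1}$, while the boundary operator $B[\underline{u}]$ vanishes identically by the same homogeneity, since $B[\phi_{0}]=0$. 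Requiring the defect to be nonpositive amounts to $g'\leq(\lambda-\lambda_{p,0})g^{p-1}\phi_{0}(x)^{p-2}$ for every $x\in S$. Here lies the main obstacle: because $p>2$, the factor $\phi_{0}^{p-2}$ does not cancel the single power of $\phi_{0}$ coming from $\underline{u}_{t}$, so the inequality cannot be enforced pointwise with equality across all vertices. I resolve this by passing to the worst vertex and choosing $g$ to solve the autonomous ODE $g'=(\lambda-\lambda_{p,0})\,m\,g^{p-1}$ with $m:=\left(\min_{x\in S}\phi_{0}(x)\right)^{p-2}>0$; since $\lambda>\lambda_{p,0}$ and $p-1>1$, this $g$ blows up in finite time.

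Finally I must fit $\underline{u}$ beneath $u$ initially, which is delicate because a nontrivial $u_{0}\geq0$ may vanish at some vertices where $\phi_{0}>0$. To overcome this I would first apply the strong comparison principle (Theorem \ref{SCP}, available since $p>2$) against the trivial subsolution $0$ to conclude $u(\cdot,t_{0})>0$ on all of $S$ for any fixed $t_{0}>0$; finiteness of $S$ then lets me pick $\varepsilon>0$ with $\varepsilon\phi_{0}\leq u(\cdot,t_{0})$ on $S$, and I start the subsolution from $g(t_{0})=\varepsilon$. Since $q=p-1\geq1$, the comparison principle (Theorem \ref{CP}) applies on $[t_{0},\infty)$ and gives $u\geq\underline{u}=g\phi_{0}$, forcing $u$ to blow up in finite time for every nontrivial nonnegative $u_{0}$.
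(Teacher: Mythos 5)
Your proposal is correct. Part (i) is essentially the paper's own argument: the paper likewise waits until a small time $t_{0}$ (from local existence), invokes the strong comparison principle against the zero solution to get positivity of $u(\cdot,t_{0})$, and compares $u$ with $v=g(t)\phi_{0}$ where $g$ solves $g'=\left[\min_{x\in S}\phi_{0}(x)\right]^{p-2}(\lambda-\lambda_{p,0})g^{p-1}$ starting from $g(t_{0})=\min_{x\in S\cup\Gamma}u(x,t_{0})/\max_{x\in S}\phi_{0}(x)$; your ``worst vertex'' choice of the ODE coefficient is exactly the paper's. One small patch: you arrange $\varepsilon\phi_{0}\le u(\cdot,t_{0})$ only on $S$, while Theorem \ref{CP} requires the ordering on $\overline{S}$; since Theorem \ref{SCP} in fact gives $u(\cdot,t_{0})>0$ on $S\cup\Gamma$, choose $\varepsilon$ using $\min_{S\cup\Gamma}u(\cdot,t_{0})$ and $\max_{S\cup\Gamma}\phi_{0}$, and note that on $\partial S\setminus\Gamma$ both $u$ and $\phi_{0}$ vanish by the boundary condition, so the ordering there is automatic.

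For (ii) and (iii) you take a genuinely different route. The paper proves (ii) by comparison with the exact separable solution $k\phi_{0}$, $k=\max_{x\in S\cup\Gamma}u_{0}(x)/\min_{x\in S}\phi_{0}(x)$ (the defect vanishes identically when $\lambda=\lambda_{p,0}$), and proves (iii) by comparison with a supersolution $g(t)\phi_{0}$ whose ODE decays polynomially; this yields pointwise, spatially resolved bounds $u\le k\max_{S}\phi_{0}$ and $u(x,t)\le g(t)\phi_{0}(x)\sim t^{-1/(p-2)}$. Your energy method --- specializing \eqref{global1} to $q+1=p$ to get $\frac{1}{2}\frac{d}{dt}\sum_{x\in S}u^{2}\le(\lambda-\lambda_{p,0})\sum_{x\in S}|u|^{p}$, then monotonicity for (ii), and $\sum_{x\in S}|u|^{p}\ge|S|^{1-p/2}\left[\sum_{x\in S}u^{2}\right]^{p/2}$ plus ODE integration for (iii) --- gives the same conclusions in the $\ell^{2}$ norm, which on a finite graph is equivalent to pointwise bounds with the same decay order. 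Your version is shorter, reuses a computation the paper has already established, and notably sidesteps a sign slip in the paper's own proof of (iii): with the paper's coefficient $M^{p-2}$, $M=\max_{x\in S}\phi_{0}(x)$, the defect of $g\phi_{0}$ equals $(\lambda_{p,0}-\lambda)g^{p-1}\phi_{0}\left[\phi_{0}^{p-2}-M^{p-2}\right]\le 0$, making $g\phi_{0}$ a subsolution rather than the needed supersolution; the construction is repaired by replacing $M^{p-2}$ with $\left[\min_{x\in S}\phi_{0}(x)\right]^{p-2}$, i.e.\ precisely the worst-vertex device you employ in (i). What the paper's approach buys in exchange is the explicit pointwise profile in terms of $\phi_{0}$.
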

	\begin{proof}
		First of all, we note that the solution $u$ to the equation \eqref{equation} is nonnegative and exists uniquely. Also, the first eigenvalue $\lambda_{p,0}>0$, since $\sigma\not\equiv 0$. In this proof, we denote $M:=\max_{x\in S}\phi_{0}(x)$ and $m=\min_{x\in S}\phi_{0})(x)$.\\
		(i). Take $t_{0}>0$ to be so small that the solution $u$ doesn't blow up before $t_{0}$. In fact, existence of such $t_{0}$ can be guaranteed by Theorem \ref{local existence}. Now consider the following ODE problem:
		\begin{equation*}
		\begin{cases}
			g'(t)=m^{p-2}(\lambda-\lambda_{p,0})g^{p-1}(t),\,\,t>t_{0},\\
			g(t_{0})=\frac{\min_{x\in S\cup\Gamma}u(x,t_{0})}{M}.
		\end{cases}
		\end{equation*}
		Here, we can easily obtain that $g(t_{0})>0$ by Theorem \ref{SCP}. Solving the above ODE problem, we have
		\begin{equation*}
			g(t)=\left[ \frac{1}{g^{2-p}(t_{0})-(p-2)m^{p-2}(\lambda-\lambda_{p,0})(t-t_{0})} \right]^\frac{1}{p-2},\,\,t\geq t_{0},
		\end{equation*}
		which implies that $g$ blows up in finite time $T^{*}$. we now define $v(x,t):=g(t)\phi_{0}(x)$ for all $(x,t)\in \overline{S}\times[t_{0},T^{*})$. Then we see that $v(x,t_{0})=g(t_{0})\phi_{0}(x)\leq u(x,t_{0})$ for all $x\in \overline{S}$ and
		\begin{equation*}
		\begin{aligned}
			&\mu(z)\frac{\partial v}{\partial_{p}n}(z,t)+\sigma(z)|v(z,t)|^{p-2}v(z,t)\\&=g^{p-1}(t)\left[\mu(z)\frac{\partial \phi_{0}}{\partial_{p}n}(z)+\sigma(z)|\phi_{0}(z)|^{p-2}\phi_{0}(z)\right]=0
		\end{aligned}
		\end{equation*}
		for all $(z,t)\in \partial S\times [t_{0},T^{*})$. Moreover, we have
		\begin{equation*}
		\begin{aligned}
			&v_{t}(x,t)-\Delta_{p,\omega}v(x,t)-\lambda v^{p-1}(x,t)\\&=\phi_{0}(x)g'(t)+\lambda_{p,0}\phi_{0}^{p-1}(x)g^{p-1}(t)-\lambda \phi_{0}^{p-1}(x)g^{p-1}(t)\\&\leq(m^{p-2}\phi_{0}(x)-\phi_{0}^{p-1}(x))(\lambda-\lambda_{p,0})g^{p-1}(t)\leq 0
		\end{aligned}
		\end{equation*}
		for all $(x,t)\in S\times[t_{0},T^{*})$. Hence, $v(x,t)\leq u(x,t)$ for all $(x,t)\in \overline{S}\times[t_{0},T^{*})$ by Theorem \ref{CP}, which implies that $u$ blows up in finite time $t_{0}<T\leq T^{*}$.\\
		(ii). Take $v(x,t)=k\phi_{0}(x)$ for all $(x,t)\in \overline{S}\times[0,\infty)$, where $k:=\frac{\max_{x\in S\cup\Gamma}u_{0}(x)}{m}$. Then we have $u_{0}(x)\leq v(x,0)$ for all $x\in \overline{S}$,
		\begin{equation*}
			\mu(z)\frac{\partial v}{\partial_{p}}(z,t)+\sigma(z)|v(z,t)|^{p-2}v(z,t)=0
		\end{equation*}
		for all $(z,t)\in \partial S\times[0,\infty)$, and
		\begin{equation*}
			v_{t}(x,t)-\Delta_{p,\omega}v(x,t)-\lambda v^{p-1}(x,t)=\lambda_{p,0}k^{p-1}\phi_{0}^{p-1}(x)-\lambda k^{p-1}\phi_{0}^{p-1}(x)=0,
		\end{equation*}
		which implies that $u(x,t)\leq v(x,t)\leq kM$ for all $(x,t)\in \overline{S}\times[0,\infty)$.\\
		(iii). Consider the following ODE problem:
		\begin{equation*}
		\begin{cases}
		g'(t)=M^{p-2}(\lambda-\lambda_{p,0})g^{p-1}(t),\,\,t>0,\\
		g(0)=\frac{\max_{x\in\overline{S}}u_{0}(x)}{m}.
		\end{cases}
		\end{equation*}
		Solving the above ODE problem, we have
		\begin{equation}\label{ggg}
		g(t)=\left[ \frac{1}{g^{2-p}(0)+(p-2)M^{p-2}(\lambda_{p,0}-\lambda)(t-t_{0})} \right]^\frac{1}{p-2},\,\,t\geq 0,
		\end{equation}
		which implies that $g$ exists globally. Take $v(x,t)=g(t)\phi_{0}(x)$ for all $(x,t)\in \overline{S}\times[0,\infty)$. Then we see that $u_{0}(x)\leq v(x,0)=g(t_{0})\phi_{0}(x)$ for all $x\in \overline{S}$ and
		\begin{equation*}
		\begin{aligned}
		&\mu(z)\frac{\partial v}{\partial_{p}}(z,t)+\sigma(z)|v(z,t)|^{p-2}v(z,t)\\&=g^{p-1}(t)\left[\mu(z)\frac{\partial \phi_{0}}{\partial_{p}}(z)+\sigma(z)|\phi_{0}(z)|^{p-2}\phi_{0}(z)\right]=0
		\end{aligned}
		\end{equation*}
		for all $(z,t)\in \partial S\times [t_{0},\infty)$. Moreover, we have
		\begin{equation*}
		\begin{aligned}
		&v_{t}(x,t)-\Delta_{p,\omega}v(x,t)-\lambda v^{p-1}(x,t)\\&=\phi_{0}(x)g'(t)+\lambda_{p,0}\phi_{0}^{p-1}(x)g^{p-1}(t)-\lambda \phi_{0}^{p-1}(x)g^{p-1}(t)\\&\geq(M^{p-2}\phi_{0}(x)-\phi_{0}^{p-1}(x))(\lambda-\lambda_{p,0})g^{p-1}(t)\geq 0
		\end{aligned}
		\end{equation*}
		for all $(x,t)\in S\times[t_{0},T^{*})$. Hence, $u(x,t)\leq v(x,t)$ for all $(x,t)\in \overline{S}\times[0,\infty)$ by Theorem \ref{CP}, which means that $u$ exists globally. Moreover, \eqref{ggg} gives us that the solution $u$ may decrease polynomially in $t$.
	\end{proof}
	\begin{Remark}
		Assume that $p-1=q>1$, $\sigma\not \equiv 0$, and $\lambda>\lambda_{p,0}$. Then the solution to the equation \eqref{equation} blows up in finite time $T$ for every $\lambda>0$ and nontrivial initial data $u_{0}$. In this case, we estimate the blow-up time $T$ roughly as follows:
		\begin{equation*}
		\begin{aligned}
			0<T\leq& t_{0}+ \frac{\left[\max_{x\in S}\phi_{0}(x)\right]^{p-2}}{(p-2)(\lambda-\lambda_{p,0})\left[\min_{x\in S\cup\Gamma}u(x,t_{0})\right]^{p-2}\min_{x\in S}\left[\phi_{0}(x)\right]^{p-2}}\\\leq &t_{0}+ \frac{\left[\max_{x\in S}\phi_{0}(x)\right]^{p-2}}{(p-2)(\lambda-\lambda_{p,0})\left[2\max_{x\in S}u_{0}(x)\right]^{p-2}\min_{x\in S}\left[\phi_{0}(x)\right]^{p-2}}.
		\end{aligned}
		\end{equation*}
		Here,
		$$t_{0}:=\frac{\max_{x\in S} u_{0}(x)}{\omega_{0}\left[4\max_{x\in S} u_{0}(x)\right]^{p-1}+\lambda \left[2\max_{x\in S} u_{0}(x)\right]^{q}}$$
		which led from the Theorem \ref{local existence}.
	\end{Remark}
	Now we discuss the critical case $0<p-1=q\leq 1$ and $\sigma\not\equiv 0$. Actually, we already have the result that every solution $u$ to the equation \eqref{equation} exists globally by Theorem \ref{global}. Hence, our purpose is to know when the solution $u$ vanishes in finite time $T$.
	 \begin{Theorem}\label{criticalextinc}
		Assume that $0<p-1=q<1$ and $\sigma\not\equiv 0$. If $\lambda <\lambda_{p,0}$, then every solution $u$ to the equation \eqref{equation} vanished in finite time $T>0$.
	\end{Theorem}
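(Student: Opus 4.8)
The plan is to mirror the energy method used in the proof of Theorem \ref{extinction}, exploiting the exact matching of exponents in the critical regime $p-1=q$. First I would multiply the equation \eqref{equation} by $u$, sum over $\overline{S}$, and invoke the boundary condition $B[u]=0$, Lemma \ref{laplace theorem}, and Lemma \ref{eigenvalue} exactly as in the derivation of \eqref{extinc1}. This gives
\begin{equation*}
\frac{1}{2}\frac{d}{dt}\sum_{x\in S}u^{2}(x,t)\leq -\lambda_{p,0}\sum_{x\in S}|u(x,t)|^{p}+\lambda\sum_{x\in S}|u(x,t)|^{q+1}.
\end{equation*}
Note that this step does not use $q<1$ in any essential way, so the identity transfers verbatim to the present case.

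The crucial observation, which distinguishes this critical case from Theorem \ref{extinction}, is that $q+1=p$, so the two sums on the right coalesce into a single term with coefficient $-(\lambda_{p,0}-\lambda)$. Since $\lambda<\lambda_{p,0}$ and $\lambda_{p,0}>0$ (because $\sigma\not\equiv 0$), this coefficient is strictly negative \emph{regardless of the size of} $u_{0}$. This is precisely why no smallness hypothesis on the initial data is needed here, in contrast to Theorem \ref{extinction}. Next I would apply the inequality \eqref{mainineq}: since $1<p<2$, the exponent $p/2$ lies in $(0,1)$, so taking $t_{i}=u^{2}(x_{i},t)$ yields $\left[\sum_{x\in S}u^{2}(x,t)\right]^{p/2}\leq \sum_{x\in S}|u(x,t)|^{p}$. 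Writing $y(t):=\sum_{x\in S}u^{2}(x,t)$, this produces the autonomous differential inequality
\begin{equation*}
y'(t)\leq -2(\lambda_{p,0}-\lambda)\,y(t)^{\frac{p}{2}},\quad t>0.
\end{equation*}

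Finally, since $\frac{p}{2}<1$, integrating this inequality forces finite-time extinction: one obtains $y(t)^{\frac{2-p}{2}}\leq y(0)^{\frac{2-p}{2}}-(2-p)(\lambda_{p,0}-\lambda)t$, so $y$ reaches $0$ no later than $T=\frac{\left[\sum_{x\in S}u_{0}^{2}(x)\right]^{\frac{2-p}{2}}}{(2-p)(\lambda_{p,0}-\lambda)}$, which establishes that $u$ vanishes in finite time. I expect no serious obstacle in this argument: the only points requiring genuine care are the sign of the combined coefficient $\lambda_{p,0}-\lambda$ (which is where the hypothesis $\lambda<\lambda_{p,0}$ enters and which replaces the smallness condition of Theorem \ref{extinction}) and the standard subtlety that once $y(t)$ hits $0$ it must remain $0$, consistent with extinction of the solution.
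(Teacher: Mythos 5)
Your proposal is correct and follows essentially the same route as the paper's own proof: multiply by $u$, sum over $\overline{S}$, use Lemma \ref{laplace theorem} and Lemma \ref{eigenvalue} to reach the energy inequality, exploit $q+1=p$ to merge the two terms into a single one with coefficient $-(\lambda_{p,0}-\lambda)$, apply \eqref{mainineq} with exponent $\frac{p}{2}\in(0,1)$, and integrate the resulting differential inequality to force extinction by time $T=\frac{\left[\sum_{x\in S}u_{0}^{2}(x)\right]^{\frac{2-p}{2}}}{(2-p)(\lambda_{p,0}-\lambda)}$. Your version is in fact slightly cleaner, since you correctly write $\sum_{x\in S}u_{0}^{2}(x)$ where the paper has the typo $\sum_{x\in S}u_{0}(x)$, and you explicitly note that $y$ must remain $0$ after hitting $0$.
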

	\begin{proof}
		Multiplying \eqref{equation} by $u$ and summing up over $\overline{S}$, we obtain from Lemma \ref{eigenvalue} that
		\begin{equation}\label{criex1}
		\begin{aligned}
			&\frac{1}{2}\frac{d}{dt}\sum_{x\in S}u^{2}\left(x,t\right)\\=& \sum_{x\in \overline{S}}\left[\Delta_{p,\omega}u(x,t)\right] u(x,t)-\sum_{z\in\partial S}\left[\Delta_{p,\omega}u(x,t)\right] u(x,t)+\lambda\sum_{x\in \overline{S}}|u(x,t)|^{p}\\=&-\frac{1}{2}\sum_{x\in\overline{S}} \left\vert u\left(y,t\right)-u\left(x,t\right)\right\vert^{p}\omega\left(x,y\right)-\sum_{z\in\Gamma}\frac{\sigma(z)}{\mu(z)}|u(z)|^{p} +\lambda \sum_{x\in S}\left\vert u\left(x,t\right)\right\vert^{p}\\\leq &-(\lambda_{p,0}-\lambda)\sum_{x\in S}|u(x,t)|^{p}.
		\end{aligned}
		\end{equation}
		Therefore, applying the inequality \eqref{mainineq} to \eqref{criex1}, we obtain
		\begin{equation*}
			\frac{1}{2}\frac{d}{dt}\sum_{x\in S}u^{2}(x,t)\leq -(\lambda_{p,0}-\lambda)\left[ \sum_{x\in S}u^{2}(x,t) \right]^\frac{p}{2},
		\end{equation*}
		which implies that
		\begin{equation*}
			\sum_{x\in S}u^{2}(x,t)\leq \left[ \left[\sum_{x\in S}u_{0}(x)\right]^\frac{2-p}{2}-(2-p)(\lambda_{p,0}-\lambda)t \right]_{+}^\frac{2}{2-p}.
		\end{equation*}
		Here, $(a)_{+}:=\max \{0,a\}$. Hence, there exists $T>0$ such that $\sum_{x\in S}u^2(x,T)=0$, which means that every solution $u$ vanishes in finite time $T$.
		\begin{Remark}
			When the solutions extinct in the above, the extinction time $T$ can be estimated as
			\begin{equation*}
			0<T\leq \frac{\left[\sum_{x\in S}u_{0}\right]^\frac{2-p}{2}}{(2-p)(\lambda_{p,0}-\lambda)}.
			\end{equation*}
		\end{Remark}
	\end{proof}
\section{Numerical illustration}
In this section, we exploit our result in the previous section with numerical experiments. Through this section, we consider a graph $S=\{x_{1}, x_{2},x_{3}, x_{4}\}$ with the boundary $\partial S=\{x_{5}, x_{6}\}$ and the weight given by Figure \ref{fig1}.
\begin{figure}[ht]
	\scriptsize
	\begin{displaymath}
	\xymatrix{
		& & \bullet_{x_{2}}\ar@{-}_{2}[r]\ar@{-}_{2}[ld]&\bullet_{x_{4}}\ar@{..}_{2}[r]\ar@{-}_{1}[ld]&\circ_{x_{6}}\\
		\circ_{x_{5}}\ar@{..}_{1}[r]&\bullet_{x_{1}}\ar@{-}_{2}[r]&\bullet_{x_{3}}\\
	}
	\end{displaymath}
	\caption{\label{fig1}Graph $\overline{S}$}
\end{figure}
Here, the number on (or under) each edge denotes the weight.\\

Firstly we consider the case $\sigma \equiv 0$ (Neumann boundary condition).\\
By the boundary condition $B[u]=0$, we obtain
$$
u(x_{5},t)=u(x_{1},t),\,\,\text{and}\,\,u(x_{6},t)=u(x_{4},t)
$$
for all $t\geq 0$.
\begin{example}[$\sigma\equiv 0$]
	Consider the case $p=3$, $q=2$, $\lambda=2$, and $\sigma \equiv 0$ (Neumann boundary condition). Put the initial data $u_{0}$ by $u_{0}(x_{1})=u_{0}(x_{2})=u_{0}(x_{3})=u_{0}(x_{5})=0$ and $u_{0}(x_{4})=u_{0}(x_{6})=0.1$. Figure $2$ show the solution to the equation \eqref{equation} which exploit the Theorem \ref{neumann}.
	\begin{figure}[ht]
		\centering
		\includegraphics[width=6cm,height=5cm]{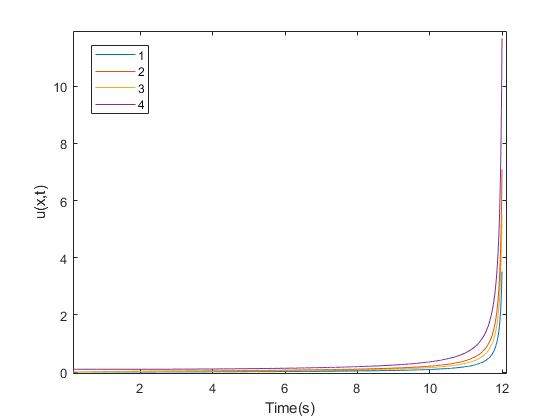}
		\caption{Blow-up solution for $q>1$ under the Neumann boundary condition}
	\end{figure}
	We can see that the solution blows up in finite time even though the initial data is small.
\end{example}
\begin{example}[$\sigma\not\equiv 0$]
	Consider the case $p=3$, $q=0.5$, $\lambda=2$, and $\sigma \equiv 0$ (Neumann boundary condition). Put the initial data $u_{0}$ by $u_{0}(x_{1})=u_{0}(x_{2})=u_{0}(x_{3})=u_{0}(x_{5})=u_{0}(x_{4})=u_{0}(x_{6})=7$. Figure $3$ show the solution to the equation \eqref{equation} which exploit the Theorem \ref{neumann}.
	\begin{figure}[ht]
		\centering
		\includegraphics[width=6cm,height=5cm]{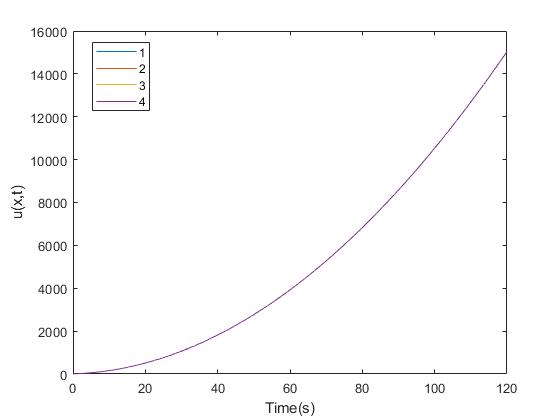}
		\caption{Global solution for $q\leq 1$ under the Neumann boundary condition}
	\end{figure}
\newpage
	We can see that the solution exists globally even though the initial data is large.
\end{example}
Next, we discuss the case $\sigma \not \equiv 0$. From now on, we only consider the case $\mu(x_{5})=\sigma(x_{5})=1$, $\mu(x_{6})=1$, and $\sigma(x_{6})=0$. Then we obtain
$$
u(x_{6},t)=\frac{1}{2}u(x_{4},t),\,\,\text{and}\,\,u(x_{6},t)=u(x_{4},t)
$$
for all $t\geq 0$. Also, we consider two types of initial data $u_{0}$ and $u_{1}$ as belows.
	\begin{table}[ht]\label{table}
	\scriptsize
	\centering
	\begin{tabular}{||c|c|c|c|c|c|c||}
		\hline
		& $x_{1}$ & $x_{2}$ & $x_{3}$ & $x_{4}$ & $x_{5}$ & $x_{6}$\\ \hline
		$u_{0}(x_{i})$ & 2& 1& 0& 1&1 &1 \\ \hline
		$u_{1}(x_{i})$ & 2& 1& 1& 2&1 &2 \\ \hline
	\end{tabular}
\end{table}\\
Then we can easily see that the initial data $u_{0}$ and $u_{1}$ satisfy the boundary condition $B[u]=0$.
\begin{example}[$0<p-1<q$]
	Consider the case $p=1.5$, $q=2$, and the initial data $u_{0}$. Then Figure $4$ show the blow-up solution and extinctive solution to the equation \eqref{equation} with the case $\lambda=3$ and $\lambda=0.1$, respectively.
	\begin{figure}[ht]
		\centering
		\includegraphics[width=6cm,height=5cm]{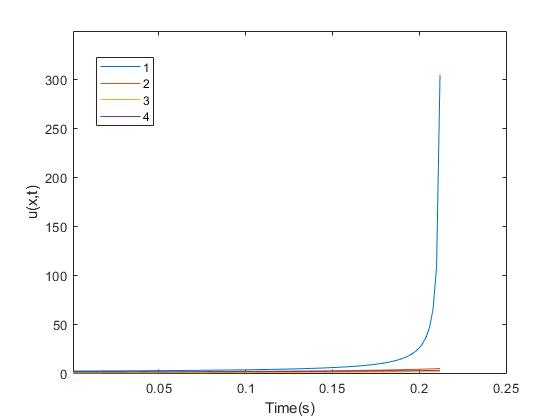}
		\includegraphics[width=6cm,height=5cm]{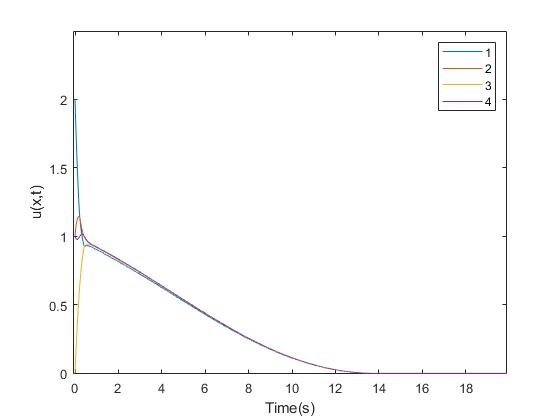}
		\caption{Blow-up solution for $\lambda=3$ (left) and extinctive solution for $\lambda=0.1$ (right)}
	\end{figure}
	\newpage
	In fact, we see that $\lambda_{p,0} \doteqdot 0.29 $ when $p=1.5$. By Theorem \ref{blowup} and Theorem \ref{extinction}, the solution $u$ blows up in finite time if $\lambda>\frac{\max_{x\in S}d_{\omega}x}{\max_{x\in S}u_{0}^{q-p+1}(x)}\doteqdot 1.77$, and the solution $u$ vanishes in finite time if $\lambda<\frac{\lambda_{p,0}}{\vert S\vert^{\frac{1-q}{2}}\left[ \sum_{x\in S}u_{0}^{2}(x) \right]^\frac{q-p+1}{2} }\doteqdot 0.15$.
\end{example}

\begin{example}[$0<p-1<q$]
	Consider the case $p=1.3$, $q=0.8$, and  $\lambda=0.18$. Then Figure $5$ illustrate the result of the Theorem \ref{extinction} with initial data $u_{0}$ to the extinctive solution and $u_{1}$ to the nonextinctive solution, respectively.
	\begin{figure}[ht]
		\centering
		\includegraphics[width=6cm,height=5cm]{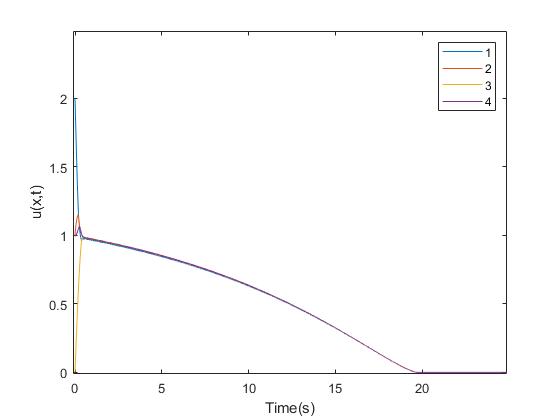}
		\includegraphics[width=6cm,height=5cm]{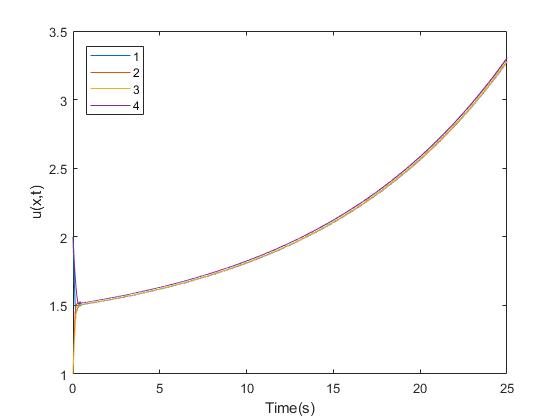}
		\caption{Extinctive solution for $u_{0}$ (left) and nonextinctive solution for $u_{1}$ (right)}
	\end{figure}
\end{example}
\begin{example}[$0<q<p-1$]
	Consider the case $\lambda=2$ and the initial data $u_{0}$. Then Figure $6$ show the bounded solution of the equation \eqref{equation} with the case  $p=3,\,q=0.5$ and $p=3,\,q=1.5$, which exploit the result of Theorem \ref{bound}.
	\begin{figure}[ht]
		\centering
		\includegraphics[width=6cm,height=5cm]{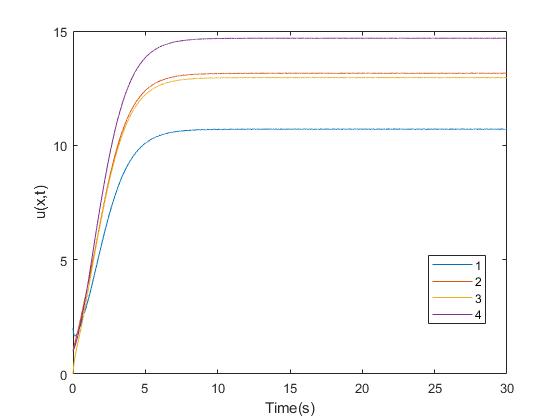}
		\includegraphics[width=6cm,height=5cm]{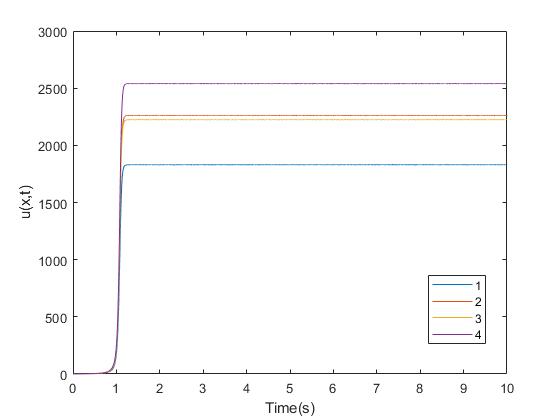}
		\caption{Bounded solution for $p=3,\,q=0.5$ (left) and $p=3,\,q=1.5$ (right)}
	\end{figure}
\end{example}

\newpage
\begin{example}[$p-1=q>1$]
	Consider the case $p-1=q=1.7$ and initial data $u_{1}$. Then Figure $7$ illustrate the result of the Theorem \ref{criticalblow} with the case $\lambda=0.5$ and $\lambda=0.05$, respectively.
	\begin{figure}[ht]
		\centering
		\includegraphics[width=6cm,height=5cm]{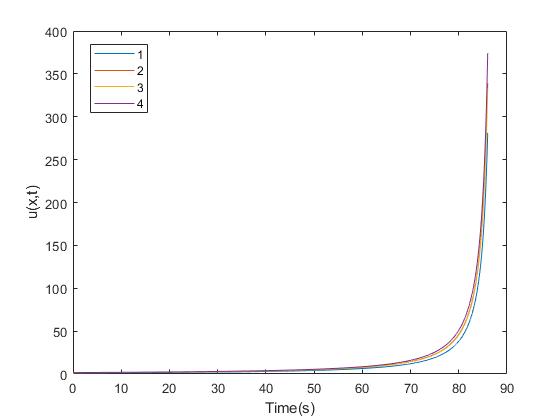}
		\includegraphics[width=6cm,height=5cm]{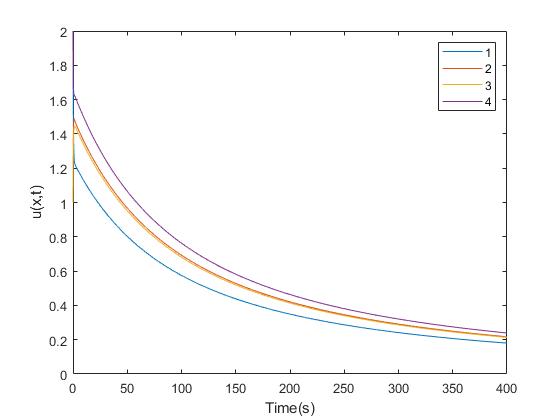}
		\caption{Blow-up solution for $\lambda=0.07$ (left) and global solution for $\lambda=0.05$ (right).}
	\end{figure}
	In fact, we see that $\lambda_{p,0} \doteqdot 0.06$ when $p=2.7$. By Theorem \ref{criticalextinc}, the solution $u$ blows up in finite time if $\lambda>\lambda_{p,0}$, and the solution $u$ exists globally if $\lambda\leq\lambda_{p,0}$.
\end{example}
\begin{example}[$p-1=q<1$]
	Consider the case $p-1=q=0.4$ and initial data $u_{1}$. Then Figure $8$ illustrate the result of the Theorem \ref{criticalextinc} with the case $\lambda=0.1$ and $\lambda=0.3$, respectively.
	\begin{figure}[ht]
		\centering
		\includegraphics[width=6cm,height=5cm]{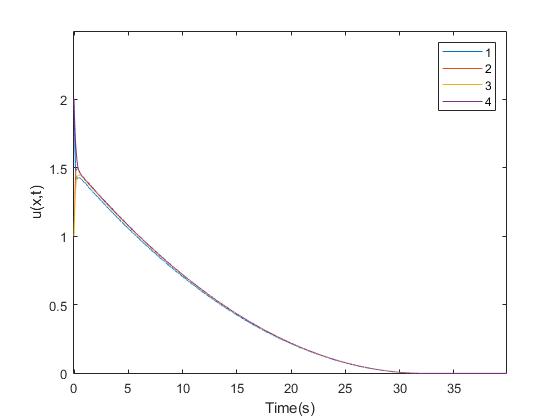}
		\includegraphics[width=6cm,height=5cm]{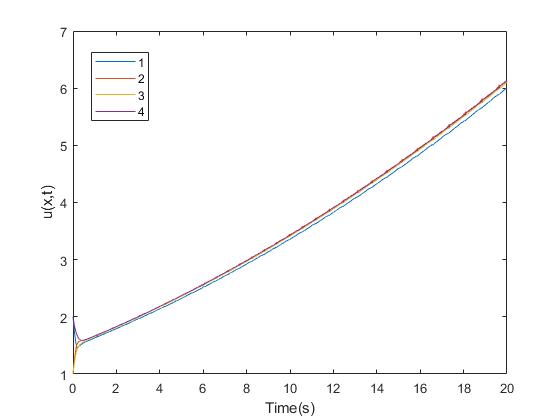}
		\caption{Extinctive solution for $\lambda=0.1$ (left) and global solution for $\lambda=0.3$ (right).}
	\end{figure}
	\newpage
	In fact, we see that $\lambda_{p,0} \doteqdot 0.204 $ when $p=1.5$. By Theorem \ref{criticalextinc}, the solution $u$ vanishes in finite time if $\lambda>\lambda_{p,0}$, and the solution $u$ exists globally if $\lambda\leq\lambda_{p,0}$.
\end{example}

\section*{Acknowledgments}
The first author is supported by Basic Science Research Program through the National Research Foundation of Korea (NRF) funded by the Ministry of Education (NRF-2015R1D1A1A01059561).

\section*{Conflict of Interests}
The authors declares that there is no conflict of interests regarding the publication of this paper.

\section*{References}

\end{document}